\newtheorem{Theorem}{Theorem}[section]
\newtheorem{Lemma}[Theorem]{Lemma}
\newtheorem{Corollary}[Theorem]{Corollary}
\newtheorem{Proposition}[Theorem]{Proposition}
\newtheorem{Observation} [Theorem] {Observation}
\newtheorem{Remark}[Theorem]{Remark}
\begin{document}
\title{On The Fixed Number of Graphs}
\author{I. Javaid*, M. Murtaza, M. Asif, F. Iftikhar}
%\subjclass{Primary: , Secondary: }
\keywords{Fixing set; Stabilizer; Fixing number; Fixed number\\
\indent 2010 {\it Mathematics Subject Classification.} 05C25, 05C60.\\
\indent $^*$ Corresponding author: imran.javaid@bzu.edu.pk}
%\indent This research of the authors was partially supported by the
%Higher Education Commission of\\
%\indent Pakistan}
\address{Centre for advanced studies in Pure and Applied Mathematics,
Bahauddin Zakariya University Multan, Pakistan\newline Email:
imran.javaid@bzu.edu.pk, mahru830@gmail.com, asifmaths@yahoo.com,
farah$\_$sethi@yahoo.com}
\date{}
\maketitle
\begin{abstract}
An automorphism on a graph $G$ is a bijective mapping on the vertex
set $V(G)$, which preserves the relation of adjacency between any
two vertices of $G$. An automorphism $g$ fixes a vertex $v$ if $g$
maps $v$ onto itself. The stabilizer of a set $S$ of vertices is the
set of all automorphisms that fix vertices of $S$. A set $F$ is
called fixing set of $G$, if its stabilizer is trivial. The fixing
number of a graph is the cardinality of a smallest fixing set. The
fixed number of a graph $G$ is the minimum $k$, such that every
$k$-set of vertices of $G$ is a fixing set of $G$. A graph $G$ is
called a $k$-fixed graph if its fixing number and fixed number are
both $k$. In this paper, we study the fixed number of a graph and
give construction of a graph of higher fixed number from graph with
lower fixed number. We find bound on $k$ in terms of diameter $d$ of
a distance-transitive $k$-fixed graph.
\end{abstract}
\section{Introduction}
Let $G=(V(G),E(G))$ be a connected graph with order $n$. The degree
of a vertex $v$ in $G$, denoted by deg$_{G}(v)$, is the number of
edges that are incident to $v$ in $G$. We denote by $\Delta(G)$, the
maximum degree and $\delta(G)$, the minimum degree of vertices of
$G$. The \emph{distance} between two vertices $x$ and $y$, denoted
by $d(x,y)$, is the length of a shortest path between $x$ and $y$ in
$G$. The $eccentricity$ of a vertex $x\in V(G)$ is $e(x)=$max$_{y\in
V(G)}d(x,y)$ and the $diameter$ of $G$ is max$_{x\in V(G)}e(x)$. For
a vertex $v\in V(G)$, the \emph{neighborhood} of $v$, denoted by
$N_{G}(v)$, is the set of all vertices adjacent to $v$ in $G$.
\par An automorphism of $G$, $g:V(G)\rightarrow V(G)$, is a
permutation on $V(G)$ such that $g(u)g(v) \in E(G) \Leftrightarrow
uv \in E(G)$, i.e., the relation of adjacency is preserved under
automorphism $g$. The set of all such permutations for a graph $G$
forms a group under the operation of composition of permutations. It
is called the automorphism group of $G$, denoted by $Aut(G)$ and it
is a subgroup of symmetric group $S_n$, the group of all
permutations on $n$ vertices. A graph $G$ with trivial automorphism
group is called rigid or asymmetric graph and such a graph has no
symmetries. In this paper, all the graphs (unless stated otherwise)
have non-trivial automorphism group i.e., $Aut(G)\neq \{id\}$. Let
$u,v\in V(G)$, we say $u$ is $similar$ to $v$, denoted by $u\sim v$
(or more specifically $u\sim^g v$) if there is an automorphism $g\in
Aut(G)$ such that $g(u)=v$. It can be seen that similarity is an
equivalence relation on vertices of $G$ and hence it partitions the
vertex set $V(G)$ into disjoint equivalence classes, called orbits
of $G$. The $orbit$ of a vertex $v$ is defined as
$\mathcal{O}(v)=\{u\in V(G)|u\sim v\}$. The idea of fixing sets was
introduced by Erwin and Harary in \cite{EH}. They used following
terminology: The $stabilizer$ of a vertex $v\in V(G)$ is defined as,
$stab(v)=\{f\in Aut(G)|f(v)=v\}$. The $stabilizer$ of a set of
vertices $F\subseteq V(G)$ is defined as, $stab(F)=\{f\in Aut(G) |
f(v)=v$ \emph{for all} $v\in F\}=\cap_{v\in F}stab(v)$. A vertex $v$
is $fixed$ by an automorphism $g\in Aut(G)$ if $g\in stab(v)$. A set
of vertices $F$ is a $fixing$ $set$ if $stab(F)$ is trivial, i.e.,
the only automorphism that fixes all vertices of $F$ is the trivial
automorphism. The cardinality of a smallest fixing set is called the
\emph{fixing number} of $G$. We will refer a set of vertices
$A\subset V(G)$ for which $stab(A)\setminus \{id\}\neq \emptyset$ as
\emph{non-fixing} set. A vertex $v\in V(G)$ is called a \emph{fixed}
vertex if $stab(v)=Aut(G)$. Every graph has a fixing set. Trivially
the set of vertices itself is a fixing set. It is also clear that a
set containing all but one vertex is a fixing set. Following theorem
gives a relation between orbits and stabilizers.
\begin{Theorem}\label{OrbStabThrm}(Orbit-Stabilizer Theorem)
 Let $G$ be a connected graph and $v\in V(G)$.
  $$|Aut(G)|=|\mathcal{O}(v)| · |stab_{Aut(G)}(v)|.$$
\end{Theorem}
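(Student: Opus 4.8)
The plan is to deduce this from the standard orbit--stabilizer correspondence in group theory, applied to the natural action of the finite group $\Gamma := Aut(G)$ on the vertex set $V(G)$. Fix $v \in V(G)$ and write $H := stab_{Aut(G)}(v)$, which is a subgroup of $\Gamma$: the identity fixes $v$, and the set of automorphisms fixing $v$ is closed under composition and under taking inverses. The key object is the map $\phi$ from the set of left cosets $\Gamma/H$ to the orbit $\mathcal{O}(v)$ defined by $\phi(gH) = g(v)$.

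First I would check that $\phi$ is well defined: if $gH = g'H$, then $g^{-1}g' \in H$, so $(g^{-1}g')(v) = v$ and hence $g'(v) = g(v)$, so the value $\phi(gH)$ does not depend on the chosen representative. Running the same computation in reverse shows $\phi$ is injective, since $g(v) = g'(v)$ forces $g^{-1}g' \in H$, i.e. $gH = g'H$. Surjectivity is immediate from the definition of the orbit: every vertex similar to $v$ is of the form $g(v)$ for some $g \in \Gamma$. Therefore $\phi$ is a bijection and $|\Gamma/H| = |\mathcal{O}(v)|$.

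Finally I would invoke Lagrange's theorem, which gives $|\Gamma| = [\Gamma : H]\,|H| = |\Gamma/H|\,|H|$; combining this with the bijection above yields $|Aut(G)| = |\mathcal{O}(v)|\cdot|stab_{Aut(G)}(v)|$, as claimed. There is no real obstacle here, since the statement is a classical fact; the only point that deserves a moment's care is the verification that $\phi$ is independent of the choice of coset representative, which is precisely the well-definedness step above, and everything else is purely formal.
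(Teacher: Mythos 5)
Your proof is correct: it is the standard coset argument (the bijection $gH \mapsto g(v)$ between $\Gamma/H$ and $\mathcal{O}(v)$, followed by Lagrange's theorem), and the well-definedness and injectivity checks are carried out properly. The paper states this classical theorem without proof, so there is nothing to compare against; your argument is exactly the canonical one.
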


Boutin introduced determining set in \cite{B}. A set $D\subseteq
V(G)$ is said to be a \emph{determining set} for a graph $G$ if
whenever $g,h \in Aut(G)$ so that $g(x)=h(x)$ for all $x\in D$, then
$g(v)=h(v)$ for all $v \in V(G)$. The minimum cardinality of a
determining set of a graph $G$, denoted by $Det(G)$, is called the
\emph{determining number} of $G$. Following lemma given in \cite{CJ}
shows equivalence between definitions of fixing set and determining
sets.
\begin{Lemma}\label{lma1}
\cite{CJ} A set of vertices is a fixing set if and only if it is a
determining set. \end{Lemma}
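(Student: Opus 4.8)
The plan is to prove the two implications separately, exploiting the group structure of $Aut(G)$ to translate between the two conditions. The bridge in both directions is the elementary observation that two automorphisms $g,h$ agree on a set $S$ of vertices precisely when $h^{-1}g$ fixes every vertex of $S$, i.e. $h^{-1}g\in stab(S)$; note $h^{-1}g\in Aut(G)$ since $Aut(G)$ is closed under composition and inverses.

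First I would show that every fixing set is a determining set. Let $F$ be a fixing set, so $stab(F)=\{id\}$, and suppose $g,h\in Aut(G)$ satisfy $g(x)=h(x)$ for all $x\in F$. Then $h^{-1}g$ fixes each $x\in F$, so $h^{-1}g\in stab(F)=\{id\}$; hence $g=h$, and in particular $g(v)=h(v)$ for all $v\in V(G)$. Thus $F$ is a determining set.

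Conversely, I would show that every determining set is a fixing set. Let $D$ be a determining set and take any $f\in stab(D)$. Then $f$ and $id$ are automorphisms agreeing on $D$, so the defining property of a determining set forces $f(v)=id(v)=v$ for all $v\in V(G)$, i.e. $f=id$. Hence $stab(D)=\{id\}$, so $D$ is a fixing set.

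There is no substantive obstacle here; the statement is essentially a reformulation, and the only points requiring (trivial) care are that $Aut(G)$ is closed under inverses — so that $h^{-1}g$ is again an automorphism — and that the identity is always available as the comparison automorphism in the second direction.
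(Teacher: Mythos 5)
Your proof is correct and complete: both directions are handled properly via the observation that $g$ and $h$ agree on $S$ exactly when $h^{-1}g\in stab(S)$, and the converse direction correctly uses the identity as the comparison automorphism. The paper itself gives no proof of this lemma (it is cited from Gibbons and Laison), and your argument is the standard one establishing the equivalence.
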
 Thus notions of fixing number and
determining number of a graph $G$ are same.
\par
The notion of fixing set is closely related to another well-studied
notion, resolving set, defined in the following way: A vertex $v\in
G$ resolves vertices $x,y\in V(G)$ if $d(x,v)\neq d(y,v)$. A set
$W\subseteq V(G)$ is called a resolving set for $G$ if for every
pair $x,y\in V(G)$, there exists a vertex $w\in W$ such that
$d(x,w)\neq d(y,w)$. The idea of resolving set was introduced by
Slater \cite{SL} and he referred this set as a locating set. The
cardinality of a minimum resolving set in a graph $G$, denoted by
$\beta(G)$, is called the \emph{metric dimension} of $G$. The
\emph{resolving number} $res(G)$ of $G$ is the minimum $k$ such that
every $k$-set of vertices is a resolving set of $G$. The following
proposition was independently proved by Erwin and Harary \cite{EH}
(using fixing sets instead of determining sets) and Boutin \cite{B}.
\begin{Proposition}
\cite{B,EH,FH} If $S\subseteq V(G)$ is a resolving set of $G$ then
$S$ is a fixing set of $G$. In particular, $fix(G)\leq \beta(G)$.
\end{Proposition}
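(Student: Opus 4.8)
The plan is to prove the contrapositive-free version directly: take an automorphism that stabilizes a resolving set $S$ and show it must be the identity. The one auxiliary fact I would isolate first is that automorphisms are distance-preserving, i.e. if $g\in Aut(G)$ then $d(g(x),g(y))=d(x,y)$ for all $x,y\in V(G)$. This follows from the definition of automorphism by a short induction on the length of a shortest $x$--$y$ path: $g$ maps a path of length $\ell$ between $x$ and $y$ to a walk of length $\ell$ between $g(x)$ and $g(y)$, so $d(g(x),g(y))\le d(x,y)$, and applying the same argument to $g^{-1}$ gives the reverse inequality.

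Next, let $S=\{w_1,\dots,w_k\}$ be a resolving set of $G$ and suppose $g\in stab(S)$, so that $g(w_i)=w_i$ for every $i$. Pick an arbitrary vertex $v\in V(G)$ and compare the distance vectors of $v$ and $g(v)$ relative to $S$. For each $i$, distance-preservation gives $d(g(v),w_i)=d(g(v),g(w_i))=d(v,w_i)$. Hence no vertex of $S$ resolves the pair $v,g(v)$, and since $S$ is a resolving set this forces $v=g(v)$. As $v$ was arbitrary, $g=id$. Therefore $stab(S)$ is trivial, i.e. $S$ is a fixing set of $G$.

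For the numerical consequence, apply the above to a resolving set $S$ of minimum cardinality, so $|S|=\beta(G)$; since this $S$ is also a fixing set, $fix(G)\le |S|=\beta(G)$. I do not expect a genuine obstacle here: the whole argument is an unwinding of the definitions, and the only place demanding a little care is the distance-preservation lemma for automorphisms, which is entirely standard. (It is worth remarking, though not needed for the statement, that the converse fails: a fixing set need not be resolving, so the inequality $fix(G)\le\beta(G)$ can be strict.)
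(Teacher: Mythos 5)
Your proof is correct and is the standard argument for this result; the paper itself gives no proof, merely citing it to Boutin, Erwin--Harary, and Harary, and the argument in those sources is precisely yours (automorphisms preserve distances, so an automorphism fixing a resolving set pointwise makes $v$ and $g(v)$ have identical distance vectors, forcing $v=g(v)$). No issues.
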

Jannesari and Omoomi have discussed the properties of resolving
graphs and randomly $k$-dimensional graphs in \cite{MJB} and
\cite{ORD} which were based on resolving number and metric dimension
of $G$. In this paper, we define the fixed number of a graph, fixing
graph and $k$-fixed graphs. We discuss properties of these graphs in
the context of fixing sets and the fixing number.

\par
The \emph{fixed number} of a graph $G$, $fxd(G)$, is the minimum $k$
such that every $k$-$set$ of vertices is a fixing set of $G$. It may
be noted that $0 \leq fix(G) \leq fxd(G) \leq n-1 $. A graph is said
to be a \emph{$k$-fixed graph} if $fix(G)=fxd(G)=k$. In this paper,
the fixed number $k$, remains in the focus of our attention. A path
of even order is a 1-fixed graph. Similarly a cyclic graph of odd
order is a 2-fixed graph. We give a construction of a graph with
$fxd(G)=r+1$ from a graph with $fxd(G)=r$ in Theorem
\ref{ThmDtrConstruction}. Also a characterization of $k$-fixed
graphs is given in Theorem \ref{thm4}.
%\begin{Observation}\label{obs2}
%Let $S\subseteq V(G)$ such that for every pair of vertices $x,y \in
%V(G)$, if $d(x,u) \not= d(y,u)$ $\forall$ $u\in S$, then $S$
%determines $G$.
%\end{Observation}

%This observation implies that every resolving set is a determining
%set.
%We use following observation and lemma given in \cite{MJB} in our
%later discussion.
\section{The Fixed Number}
Consider the graph $G$ in Figure 1. It is clear that
$Aut(G)=\{e,(12)(34)(56)\}$.
\begin{figure}[h]
       \centerline
       {\includegraphics[width=6cm]{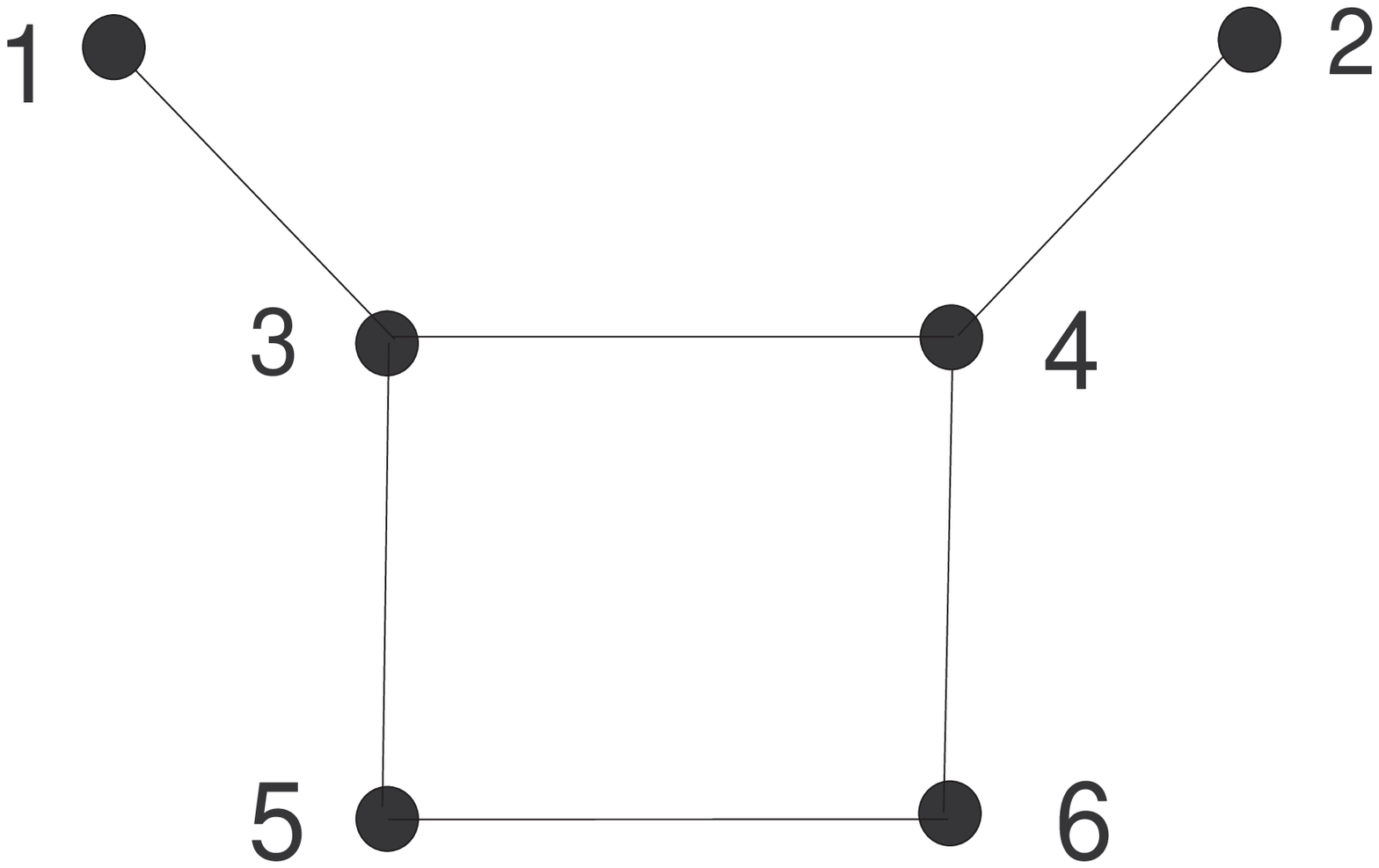}}
        \caption{Graph $G$}\label{fig1}
\end{figure}
Also $stab(v)=\{id\}$ for all $v\in V(G)$. Thus $\{v\}$ for each
$v\in V(G)$ forms a fixing set for $G$. Hence $fix(G)=fxd(G)=1$ and
$G$ is 1-fixed graph. Thus we have following proposition immediately
from definition of fixing set.
\begin{Proposition}\label{PropDtrG1}
Let $G$ be a connected graph and $fxd(G)=1$, then \\
(i) $|\mathcal{O}(v)|=|Aut(G)|$ for all $v\in V(G)$.\\
(ii) $G$ does not have fixed vertices.
\begin{proof}
(i) Since $|stab(v)|=1$ for all $v\in V(G)$ and result follows by
Theorem \ref{OrbStabThrm}. (ii) As $stab(v)=Aut(G)$ for a fixed
vertex $v\in V(G)$ and hence $\{v\}$ does not form a fixing set for
$G$.
\end{proof}
\end{Proposition}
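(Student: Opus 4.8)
The plan is to unwind the definition of the fixed number and then feed the resulting information into the Orbit--Stabilizer Theorem. Since $fxd(G)=1$, every singleton $\{v\}$ with $v\in V(G)$ is a fixing set of $G$; by the definition of a fixing set this says precisely that $stab(v)$ is trivial, i.e. $|stab(v)|=1$, for every $v\in V(G)$. This single observation is the engine behind both parts.

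For part (i), I would apply Theorem \ref{OrbStabThrm} to an arbitrary vertex $v$: it gives $|Aut(G)|=|\mathcal{O}(v)|\cdot|stab_{Aut(G)}(v)|$, and substituting $|stab(v)|=1$ from the preceding paragraph yields $|Aut(G)|=|\mathcal{O}(v)|$ for all $v\in V(G)$.

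For part (ii), I would argue by contradiction. If some $v\in V(G)$ were a fixed vertex, then by definition $stab(v)=Aut(G)$; but throughout the paper we assume $Aut(G)\neq\{id\}$, so $stab(v)$ would be nontrivial, and hence $\{v\}$ would fail to be a fixing set, contradicting $fxd(G)=1$. Therefore $G$ has no fixed vertex. (Equivalently, a fixed vertex would have $|stab(v)|=|Aut(G)|>1$, directly contradicting $|stab(v)|=1$ from the first paragraph.)

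The argument is essentially immediate, so there is no real obstacle; the only point that needs care is part (ii), where one must invoke the standing hypothesis that the automorphism group is nontrivial --- without it every vertex would be fixed and the statement would be false.
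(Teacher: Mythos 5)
Your proof is correct and follows essentially the same route as the paper's: derive $|stab(v)|=1$ for every $v$ from $fxd(G)=1$, apply the Orbit--Stabilizer Theorem for (i), and for (ii) note that a fixed vertex would have $stab(v)=Aut(G)$ and hence could not form a singleton fixing set. Your explicit appeal to the standing assumption $Aut(G)\neq\{id\}$ in part (ii) is a small but worthwhile clarification that the paper leaves implicit.
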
 The problem of `finding the minimum $k$
such that every $k$-subset of vertices of $G$ is a fixing set of
$G$' is equivalent to the problem of `finding the maximum $r$ such
that there exist an $r$-subset of vertices of $G$ which is not
fixing set of $G$'. Thus, the cardinality of a largest non-fixing
set in $G$ helps in finding the fixed number of $G$. We can see
$r=0$ for the graph $G$ in Figure 1. We have following remarks about
non-fixing sets.
\begin{Remark}\label{NonFixingSets}
Let $G$ be graph of order $n$.\\
(i) If $r$ $(0 \le r \le n-2)$ be the cardinality of a largest
non-fixing subset of $G$, then
$fxd(G)=r+1$.\\
(ii) Let $A$ be a non-fixing set of $G$. For each non-trivial $g\in
stab(A)$ there exist at least one set $B\subset V(G)$ such that
$u\sim^g v$ for all $u,v\in B$.
\end{Remark}

\begin{Proposition}\label{prop3}
Let $G$ be a graph and $u,v \in V(G)$ such that $N(v)\backslash
\{u\}=N(u)\backslash\{v\}$. Let $F$ be a fixing set of $G$, then $u$
or $v$ is in $F$.
\end{Proposition}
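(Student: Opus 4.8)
The plan is to exhibit an explicit non-trivial automorphism of $G$ that fixes every vertex outside $\{u,v\}$, and then argue that any fixing set must meet $\{u,v\}$ in order to kill this automorphism. Concretely, I would let $g$ be the transposition that swaps $u$ and $v$ and fixes every other vertex of $G$, i.e. $g(u)=v$, $g(v)=u$, and $g(w)=w$ for all $w\in V(G)\setminus\{u,v\}$.

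First I would verify that $g\in Aut(G)$ by checking that adjacency is preserved, going through the possible positions of an edge $xy$. If $x,y\notin\{u,v\}$ there is nothing to check since $g$ fixes both endpoints. If $x=u$ and $y\notin\{u,v\}$, then $uy\in E(G)$ means $y\in N(u)\setminus\{v\}=N(v)\setminus\{u\}$, so $vy\in E(G)$, and $g(u)g(y)=vy$; the reverse implication is symmetric, and the case $x=v$ is identical. Finally, if $\{x,y\}=\{u,v\}$, then $g(\{u,v\})=\{v,u\}=\{u,v\}$, so the pair $uv$ is an edge if and only if its image is. Hence $g$ preserves and reflects adjacency, so $g\in Aut(G)$, and clearly $g\neq id$ since $g(u)=v\neq u$.

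Next I would conclude by contradiction: suppose $F$ is a fixing set with $u\notin F$ and $v\notin F$. Then every vertex of $F$ lies in $V(G)\setminus\{u,v\}$, so $g$ fixes each vertex of $F$, giving $g\in stab(F)$. Since $g\neq id$, this contradicts the hypothesis that $stab(F)$ is trivial. Therefore $u\in F$ or $v\in F$, as claimed.

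The argument is essentially routine; the only point that needs a little care is the case analysis showing $g$ is an automorphism, and in particular making sure the edge case $\{x,y\}=\{u,v\}$ is handled without assuming anything about whether $uv\in E(G)$ — the hypothesis $N(v)\setminus\{u\}=N(u)\setminus\{v\}$ is deliberately agnostic about the edge $uv$, so $u,v$ may be true twins or false twins, and the same transposition works in both cases.
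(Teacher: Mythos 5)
Your proof is correct and follows essentially the same route as the paper's: both argue by contradiction that if $u,v\notin F$ then the transposition swapping $u$ and $v$ is a non-trivial element of $stab(F)$. Your version is actually more careful than the paper's, which merely asserts the existence of such an automorphism, whereas you explicitly verify via the case analysis (including the edge case $\{x,y\}=\{u,v\}$) that the transposition preserves adjacency.
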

\begin{proof}
Let $u,v\in V(G)$ such that $N(v)\backslash
\{u\}=N(u)\backslash\{v\}$. Suppose on contrary both $u$ and $v$ are
not in $F$. As $u$ and $v$ have common neighbors and $u,v\not\in F$
so there exists an automorphism $g\in Aut(G)$ such that $g\in
stab(F)$ and $g(u)=v$. Hence $stab(F)$ has a non-trivial
automorphism, a contradiction.
\end{proof}

%\begin{Observation}\label{obs2} If $D\subseteq V(G)$ determines
%$V(G)$ then for each $v\in D$ there exist $x,y \in V(G)$ such that
%$d(x,v)\neq d(y,v)$ and $d(x,t)=d(y,t)$ $\forall$ $t\in D\backslash
%\end{Observation}

\begin{Theorem}\label{thm2}
Let $G$ be a connected graph of order $n$. Then, \\
$fxd(G)=n-1$ if and only if $N(v)\backslash
\{u\}=N(u)\backslash\{v\}$ for some $u,v\in V(G)$.
\end{Theorem}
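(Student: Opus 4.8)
The plan is to prove both directions, with the backward direction being essentially a restatement of Proposition \ref{prop3} and the forward direction requiring a structural argument about automorphisms that swap a pair of vertices.

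For the backward ($\Leftarrow$) direction: suppose $N(v)\backslash\{u\}=N(u)\backslash\{v\}$ for some pair $u,v\in V(G)$. By Proposition \ref{prop3}, every fixing set must contain $u$ or $v$. Consider any $(n-2)$-subset $A$ of $V(G)$ that omits both $u$ and $v$, i.e., $A=V(G)\backslash\{u,v\}$. Since $A$ contains neither $u$ nor $v$, it is not a fixing set; in fact the transposition $g=(u\,v)$ (extended by the identity on all other vertices) is a nontrivial automorphism — one checks it preserves adjacency precisely because $u$ and $v$ have the same neighbors outside $\{u,v\}$ — and $g$ fixes every vertex of $A$. Hence $A$ is a non-fixing set of size $n-2$, so by Remark \ref{NonFixingSets}(i) we get $fxd(G)\geq n-1$. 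Combined with the universal bound $fxd(G)\leq n-1$ this gives equality.

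For the forward ($\Rightarrow$) direction: suppose $fxd(G)=n-1$. By Remark \ref{NonFixingSets}(i) there is a non-fixing set $A$ of size $n-2$, say $A=V(G)\backslash\{x,y\}$ for two distinct vertices $x,y$. Pick a nontrivial $g\in stab(A)$. Since $g$ fixes every vertex except possibly $x$ and $y$, and $g\neq id$, $g$ must move at least one of them; being a permutation that fixes everything else, $g$ must swap them: $g(x)=y$ and $g(y)=x$. Now I would exploit that $g$ is an automorphism: for any vertex $w\notin\{x,y\}$, $w\in N(x)\iff g(w)\in N(g(x))\iff w\in N(y)$ (using $g(w)=w$ and $g(x)=y$). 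This shows $N(x)\backslash\{y\}=N(y)\backslash\{x\}$, which is exactly the desired condition with $u=x$, $v=y$.

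The main obstacle — really the only subtle point — is being careful about whether $x$ and $y$ are adjacent to each other, i.e., handling the membership of $x,y$ in each other's neighborhoods. The argument above only controls neighbors outside $\{x,y\}$, which is precisely why the theorem statement uses $N(v)\backslash\{u\}$ and $N(u)\backslash\{v\}$ rather than $N(u)$ and $N(v)$; so in fact no case analysis on the edge $xy$ is needed, and the deleted-neighborhood formulation makes both directions go through cleanly. I would just remark explicitly that the transposition $(x\,y)$ is an automorphism in both the true-twin case ($xy\in E(G)$) and the false-twin case ($xy\notin E(G)$), so the characterization is uniform.
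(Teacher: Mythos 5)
Your proof is correct and follows essentially the same route as the paper's: the backward direction rests on Proposition \ref{prop3} (equivalently, on the transposition $(u\,v)$ being a nontrivial automorphism stabilizing $V(G)\setminus\{u,v\}$), and the forward direction observes that a nontrivial automorphism fixing all of $V(G)\setminus\{x,y\}$ must swap $x$ and $y$, forcing $N(x)\setminus\{y\}=N(y)\setminus\{x\}$. The only differences are presentational: you argue the forward direction directly while the paper argues by contradiction via a distinguishing neighbor $w$, and you make explicit the transposition that the paper's Proposition \ref{prop3} leaves implicit.
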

\begin{proof}
Let $u,v\in V(G)$ such that $N(v)\backslash
\{u\}=N(u)\backslash\{v\}$. Suppose on the contrary that $fxd(G)\leq
n-2$, then $V(G)\backslash \{u,v\}$ is a fixing set for $G$. But, by
Proposition \ref{prop3}, every fixing set contains either $u$ or
$v$.
This contradiction implies that, $fxd(G)=n-1$.\\
Conversely, let $fxd(G)=n-1$. Thus, there exists a non-fixing subset
$T$ of $V(G)$ with $|T|=n-2$. Assume $T=V(G)\setminus\{u,v\}$ for
some $u,v \in V(G)$. Our claim is that $u,v$ are those vertices of
$G$ for which $N(u)\setminus \{v\}=N(v)\setminus \{u\}$. Suppose on
contrary $N(u)\setminus \{v\}\neq N(v)\setminus \{u\}$, then there
exists a vertex $w\in T$ such that $w$ is adjacent to one of the
vertices $u$ or $v$. Without loss of generality, let $w$ is adjacent
to $u$ but not adjacent to $v$. Let a non-trivial automorphism $g\in
stab(T)$ (such a non-trivial automorphism exists because $T$ is not
a fixing set). Since $g$ is non-trivial and $V(G)\setminus
T=\{u,v\}$, so $g(u)=v$. But $u$ cannot map to $v$ under $g$,
because $g\in stab(w)$ and $w$ is adjacent $u$ and not adjacent to
$v$. Hence $g$ also fixes $u$ and $v$, i.e., $g\in stab\{u,v\}$ and
consequently $g$ becomes trivial. Hence $stab(T)$ is trivial, a
contradiction. Thus $N(u)\setminus \{v\}=N(v)\setminus \{u\}$.
\end{proof}
The following theorem given in \cite{ODN} is useful for the proof of
Corollary \ref{cor1}.

\begin{Theorem}\label{thm1}
\cite{ODN} Let $G$ be a connected graph of order $n$. Then \\
 $fix(G)=n-1$ if and only if $G=K_{n}.$
\end{Theorem}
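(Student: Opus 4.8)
The plan is to prove both implications directly from the definition of the fixing number, after first translating the hypothesis $fix(G)=n-1$ into a statement about transpositions. The converse is easy: recall that $Aut(K_n)=S_n$; for a subset $A\subseteq V(K_n)$ of size at most $n-2$ the stabilizer $stab(A)$ contains the transposition of two vertices lying outside $A$, hence is non-trivial, while $V(K_n)\setminus\{x\}$ is obviously a fixing set, so $fix(K_n)=n-1$. For the forward implication, observe that $fix(G)=n-1$ means that no set of size $n-2$ is a fixing set; in particular, for every pair $\{u,v\}\subseteq V(G)$ the set $V(G)\setminus\{u,v\}$ is non-fixing, so there is a non-trivial $g\in stab\big(V(G)\setminus\{u,v\}\big)$. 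Since $g$ fixes every vertex other than $u$ and $v$ and $g\neq id$, it must move one of $u,v$; being a permutation, the only possibility is that $g$ is the transposition swapping $u$ and $v$. Thus $(u\,v)\in Aut(G)$ for every pair of vertices of $G$.

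The second step is to extract structural information from ``every transposition is an automorphism''. Applying that $(u\,v)$ preserves adjacency and fixes each $w\notin\{u,v\}$ yields $w\sim u\iff w\sim v$, that is, $N(u)\setminus\{v\}=N(v)\setminus\{u\}$ for \emph{every} pair $\{u,v\}$, so any two vertices are twins. I would then bootstrap as follows, assuming $n\geq 2$: by connectedness there is an edge $uv\in E(G)$; for any third vertex $w$, the twin relation applied to the pair $\{v,w\}$ forces $u\in N(w)$, so $u$ is adjacent to all vertices of $G$; applying the twin relation to $\{u,x\}$ for an arbitrary vertex $x$ then forces every other vertex to be adjacent to $x$ as well, whence $G=K_n$. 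The case $n=1$ is immediate, since $K_1$ is the only graph on one vertex and $fix(K_1)=0=n-1$.

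The main point requiring care — rather than a genuine obstacle — is twofold. First, one must be certain that the non-trivial automorphism witnessing that $V(G)\setminus\{u,v\}$ is non-fixing is \emph{exactly} the transposition $(u\,v)$: this is forced because an automorphism fixing all of $V(G)\setminus\{u,v\}$ and also fixing $u$ and $v$ would be the identity. Second, connectedness is essential and must actually be used: the twin condition ``$N(u)\setminus\{v\}=N(v)\setminus\{u\}$ for all pairs'' is also satisfied by the edgeless graph $\overline{K_n}$, and it is precisely the existence of an edge (from connectedness) that excludes this case and pins $G$ down to $K_n$. Everything else in the argument is routine.
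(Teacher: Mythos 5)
The paper does not prove this statement at all---it is imported verbatim from \cite{ODN} (Caceres and Garijo) and used only as an ingredient in Corollary \ref{cor1}---so there is no in-paper argument to compare yours against. That said, your proof is correct and self-contained. The key reduction is sound: if $fix(G)=n-1$ then every set of the form $V(G)\setminus\{u,v\}$ is non-fixing, and a non-identity automorphism fixing all of $V(G)\setminus\{u,v\}$ pointwise can only be the transposition $(u\,v)$; hence every transposition lies in $Aut(G)$, which gives $N(u)\setminus\{v\}=N(v)\setminus\{u\}$ for every pair. Your bootstrap from ``all pairs are twins'' plus connectedness to $G=K_n$ is also correct (one edge forces a dominating vertex, which forces all edges), and you rightly flag that connectedness is what rules out $\overline{K_n}$. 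The converse direction is routine and handled correctly. One cosmetic remark: your twin condition for all pairs is exactly the hypothesis of the paper's Theorem \ref{thm2} (which gives $fxd(G)=n-1$), so your argument in effect shows that $fix(G)=n-1$ forces the extremal condition of Theorem \ref{thm2} simultaneously for every pair, which is a strictly stronger conclusion than that theorem needs; this is consistent with, but independent of, anything proved in the paper.
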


\begin{Corollary}\label{cor1}
Let $G$ be a graph of order $n$ and $G\neq K_n$. If $G$ is
$(n-1)$-fixed graph, then for each pair of distinct vertices $u,v
\in V(G)$, $N(u)\backslash \{v\}\neq N(v)\backslash \{u\}$.
\end {Corollary}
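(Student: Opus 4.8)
The plan is to argue by contradiction and reduce the whole statement to Theorem \ref{thm1}. Suppose, contrary to the corollary, that $G$ is a graph of order $n$ with $G\neq K_n$ which is nevertheless an $(n-1)$-fixed graph, and that some pair of distinct vertices $u,v\in V(G)$ satisfies $N(u)\setminus\{v\}=N(v)\setminus\{u\}$. By the definition of a $k$-fixed graph, being $(n-1)$-fixed means in particular that $fix(G)=n-1$. (Note we will not actually need the assumed twin pair $u,v$ to produce the contradiction; it is enough that $G$ is $(n-1)$-fixed.)

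Next I would invoke Theorem \ref{thm1}: for a connected graph of order $n$, $fix(G)=n-1$ holds if and only if $G=K_n$. Applying this to our $G$ forces $G=K_n$, which contradicts the hypothesis $G\neq K_n$. Hence no such graph exists, and in particular no pair $u,v$ with $N(u)\setminus\{v\}=N(v)\setminus\{u\}$ can occur in an $(n-1)$-fixed graph other than $K_n$; this is exactly the assertion of the corollary. Equivalently, one may phrase it affirmatively: since $G\neq K_n$, Theorem \ref{thm1} gives $fix(G)\leq n-2<n-1=fxd(G)$ whenever $fxd(G)=n-1$, so $G$ simply fails to be $(n-1)$-fixed, and the implication holds vacuously.

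It is worth recording what is really happening here, since it also explains the shape of the statement: by Theorem \ref{thm2}, the existence of a pair $u,v$ with $N(u)\setminus\{v\}=N(v)\setminus\{u\}$ is equivalent to $fxd(G)=n-1$, so the corollary is effectively the observation that for $G\neq K_n$ one cannot have $fix(G)=fxd(G)=n-1$ simultaneously — that is, $K_n$ is the unique $(n-1)$-fixed graph. I do not expect any genuine obstacle: there are no estimates or case analyses to run, and the only point to notice is that the two conditions $fix(G)=n-1$ and $G\neq K_n$ are already incompatible by Theorem \ref{thm1}, which makes the neighbourhood condition in the conclusion a consequence of the vacuity of the hypothesis (via the $fxd(G)=n-1$ criterion of Theorem \ref{thm2}) rather than something requiring independent work.
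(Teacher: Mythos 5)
Your proof is correct, and it rests on the same key fact as the paper's: Theorem \ref{thm1}. The difference is one of organization, and your version is arguably cleaner. The paper argues in contrapositive form: it assumes a pair with $N(u)\setminus\{v\}=N(v)\setminus\{u\}$ exists, invokes Theorem \ref{thm2} to conclude $fxd(G)=n-1$, and then uses Theorem \ref{thm1} to get $fix(G)\neq n-1=fxd(G)$, so $G$ is not $(n-1)$-fixed. You instead observe that the detour through Theorem \ref{thm2} is unnecessary: the hypotheses ``$G\neq K_n$'' and ``$G$ is $(n-1)$-fixed'' are already incompatible by Theorem \ref{thm1} alone, since the latter forces $fix(G)=n-1$ and hence $G=K_n$. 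So the corollary is vacuously true, and its genuine content is just that $K_n$ is the only $(n-1)$-fixed graph of order $n$. Your closing remark correctly identifies why the statement is phrased with the neighbourhood condition (it is the $fxd(G)=n-1$ criterion of Theorem \ref{thm2}), which is exactly the role that condition plays in the paper's own write-up. One minor point worth keeping in mind: Theorem \ref{thm1} is stated for connected graphs, and the paper's standing convention is that all graphs are connected, so no generality is lost in either argument.
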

\begin{proof}
Let $N(u)\backslash \{v\}= N(v)\backslash \{u\}$ for some $u,v \in
V(G)$. Then by Theorem \ref{thm2}, $fxd(G)=n-1$. Since $G\neq K_n$,
so by Theorem \ref{thm1} $fix(G)\neq n-1=fxd(G)$. Hence $G$ is not
$(n-1)$-fixed.
\end{proof}

The fixing polynomial, $F(G,x)=\sum_{i=fix(G)}^{n}{\alpha_i x^i}$,
of a graph $G$ of order $n$ is a generating function of sequence
$\{\alpha_i\}$ $(fix(G) \leq i \leq n)$, where $\alpha_i$ is the
number of fixing subset of $G$ of cardinality $i$. For more detail
about fixing polynomial see \cite{IFS} where we discussed properties
of fixing polynomial and found it for different families of graphs.
For example $F(C_3,x)=x^3+3x^2$.

\begin{Theorem}
Let $G$ be a $k$-fixed graph of order $n$. $$
F(G,x)=\sum_{i=k}^{n}{{n\choose i} x^i} $$ \end{Theorem}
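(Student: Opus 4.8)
The plan is to show that for a $k$-fixed graph the coefficient $\alpha_i$ counting fixing subsets of size $i$ equals $\binom{n}{i}$ for every $i$ with $k \le i \le n$, and equals $0$ for $i < k$; substituting these values into the definition $F(G,x)=\sum_{i=fix(G)}^{n}\alpha_i x^i$ then yields the claimed formula, using that $fix(G)=k$ gives the lower summation index.

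The key observation I would isolate first is the monotonicity of fixing sets: if $F\subseteq F'\subseteq V(G)$ and $F$ is a fixing set, then so is $F'$. This is immediate from $stab(F')=\cap_{v\in F'}stab(v)\subseteq \cap_{v\in F}stab(v)=stab(F)$, so triviality of $stab(F)$ forces triviality of $stab(F')$. I would state and prove this in one line.

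Next I would use the two hypotheses separately. From $fxd(G)=k$: every $k$-subset of $V(G)$ is a fixing set, so $\alpha_k=\binom{n}{k}$. For $i$ with $k\le i\le n$, any $i$-subset $F'$ contains some $k$-subset $F$, which is a fixing set, so by monotonicity $F'$ is a fixing set; hence every $i$-subset is fixing and $\alpha_i=\binom{n}{i}$. From $fix(G)=k$: no set of cardinality less than $k$ is a fixing set, so $\alpha_i=0$ for $0\le i<k$, which is also why the sum defining $F(G,x)$ starts at $i=k$. Assembling these gives $F(G,x)=\sum_{i=k}^{n}\binom{n}{i}x^i$.

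There is essentially no hard part here; the only thing to be careful about is citing the monotonicity of fixing sets cleanly (it is not stated explicitly earlier in the excerpt, though it is implicit in the remark that $V(G)$ and any $(n-1)$-set are fixing), so I would include its one-line proof rather than assume it.
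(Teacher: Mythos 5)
Your proposal is correct and follows essentially the same route as the paper's proof: both rely on $fxd(G)=k$ together with the fact that any superset of a fixing set is a fixing set to conclude $\alpha_i=\binom{n}{i}$ for $k\le i\le n$. Your explicit one-line justification of that monotonicity via $stab(F')\subseteq stab(F)$ is a small but welcome addition that the paper leaves implicit.
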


\begin{proof}
 Since $fix(G)=fxd(G)=k$ and superset of a fixing set is also a
 fixing set, therefore each $i$-subset $(k \leq i \leq n)$ is a
 fixing set. Hence $\alpha_i= {n \choose i}$ for each $i$, $(k \leq i \leq
 n)$. \end{proof}

%\begin{Theorem}\label{thm6}
%For a non-trivial connected graph $G$, $fxd(G)=2$ if and only if $G$
%is a path of odd order or a cyclic graph of odd order.
%\end{Theorem}
%\begin{proof}
%One direction of proof is trivial; for the other, let $fxd(G)=2$ and
%$G$ has a vertex $u_{1}$ with $deg(u_{1})\geq 3$. Let
%$N_{G}(u_{1})=\{u_{2},u_{3},u_{4}\}$. If any two of $u_{i}$, $2\leq
%i\leq 4$ are similar, say $u_{2}\sim u_{3}\not\sim u_{4}$, then
%$(u_{1},u_{4})$ is not a determining set, contradicting $fxd(G)=2$.
%If $u_{i}\sim u_{j}$ for $2 \leq i,j \leq 4$, $i\neq j$, then
%$(u_{1},u_{i})$ is not a determining set of $G$ for each $i$, $2
%\leq i \leq 4$  , contradicting $fxd(G)=2$. If $u_{i}\not\sim
%u_{j}$, then fixing of pair $(u_{i},u_{j})$, $1 \leq i,j \leq 4$,
%$i\neq j$, does not fix any other vertex of $G$, hence pair
%$(u_{i},u_{j})$, $1 \leq i,j \leq 4$, $i\neq j$, is not a
%determining set for $G$, contradicting that $fxd(G)=2$. Thus $G$
%does not contain any vertex $u_{1}$ with $deg(u_{1})\geq 3$. Hence,
%$G$ is either a path or a cyclic graph. If $G$ is a path, then by
%Theorem \ref{thm2}$(i)$ $G$ is a path of odd order. If $G$ is a
%cyclic graph, then its order is odd as $fxd(G)=3$ for a cyclic graph
%of even order.
%\end{proof}
\begin{Theorem}\label{ThmDtrConstruction}
Let $G$ be a graph of order $n$ and $fxd(G)=r$. We can construct a
graph $G'$ of order $n+1$, from $G$ such that $fxd(G')=r+1$.
\end{Theorem}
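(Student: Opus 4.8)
The plan is to translate the problem into the language of fixed points of automorphisms and then to attach a single new vertex that no automorphism can move. For a graph $H$ with $Aut(H)\neq\{id\}$ and $g\in Aut(H)$ write $\mathrm{Fix}(g)=\{x\in V(H):g(x)=x\}$. By Remark~\ref{NonFixingSets}(i) a set of vertices is non-fixing exactly when it lies inside $\mathrm{Fix}(g)$ for some $g\neq id$, so the largest non-fixing set is $\mathrm{Fix}(g_0)$ for a non-trivial $g_0$ maximising $|\mathrm{Fix}(g_0)|$; hence
\[
fxd(H)=1+\max_{g\in Aut(H)\setminus\{id\}}|\mathrm{Fix}(g)|.
\]
Since $fxd(G)=r$ (and $Aut(G)\neq\{id\}$, so $r\ge 1$), this reads $\max_{g\neq id}|\mathrm{Fix}_G(g)|=r-1$, and it suffices to build a graph $G'$ of order $n+1$ with $\max_{g\neq id}|\mathrm{Fix}_{G'}(g)|=r$.

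The mechanism I would use is the following. Adjoin a vertex $v$ to $G$, joined to a set $S\subseteq V(G)$, in such a way that (i) $v$ is a fixed vertex of $G'$, i.e.\ $\tau(v)=v$ for every $\tau\in Aut(G')$, and (ii) $\sigma(S)=S$ for every $\sigma\in Aut(G)$. Given (i), each $\tau\in Aut(G')$ permutes $V(G)$ and preserves adjacency in $G'[V(G)]=G$, so $\tau|_{V(G)}\in Aut(G)$; given (ii), each $\sigma\in Aut(G)$ extends to $\widehat\sigma\in Aut(G')$ with $\widehat\sigma(v)=v$, because for $w\in V(G)$ one has $vw\in E(G')\Leftrightarrow w\in S\Leftrightarrow\sigma(w)\in S\Leftrightarrow v\sigma(w)\in E(G')$. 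Thus $\tau\mapsto\tau|_{V(G)}$ is a group isomorphism $Aut(G')\to Aut(G)$, and for non-trivial $\sigma$ we get $\mathrm{Fix}_{G'}(\widehat\sigma)=\mathrm{Fix}_G(\sigma)\cup\{v\}$. Hence $\max_{g\neq id}|\mathrm{Fix}_{G'}(g)|=(r-1)+1=r$, which by the first paragraph gives $fxd(G')=r+1$.

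It then remains to realise (i) and (ii) by adjoining one vertex, and I would split into two cases according to whether $G$ has a universal vertex. If $\Delta(G)\le n-2$, take $S=V(G)$, so that $v$ is adjacent to every vertex of $G$; then $\deg_{G'}(v)=n$ while every old vertex has degree at most $n-1$ in $G'$, so $v$ is the unique vertex of largest degree and (i) holds, while (ii) is trivial since $S=V(G)$. If instead $\Delta(G)=n-1$, then $G$ has a universal vertex and hence $\delta(G)\ge 1$; take $S=\emptyset$, so that $v$ is isolated in $G'$; then $v$ is the unique vertex of degree $0$, giving (i), and (ii) is again trivial. In either case $Aut(G')\cong Aut(G)$ is non-trivial and $fxd(G')=r+1$.

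The only genuine obstacle is securing condition (i): a newly added vertex must not be similar to an old one, and the crude ``unique degree'' device is what forces the case split. In particular one cannot simply always adjoin a universal vertex: if $G$ already has one, then the two universal vertices $x,y$ of $G'$ satisfy $N(x)\setminus\{y\}=N(y)\setminus\{x\}$, so by Theorem~\ref{thm2} one would obtain $fxd(G')=n$ rather than $r+1$, and the isolated-vertex variant is precisely the remedy in that situation. (That variant makes $G'$ disconnected, which the statement permits; if a connected $G'$ were instead required, one would attach $v$ by a single edge to the universal vertex of $G$ and dispose of the graphs having a pendant vertex separately, which is longer but routine.)
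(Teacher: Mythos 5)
Your argument is correct, but it takes a genuinely different route from the paper's. The paper works locally with one automorphism: it picks a largest non-fixing set $A$ (of size $r-1$), a non-trivial $g\in stab(A)$ and a set $B$ of vertices permuted among themselves by $g$, joins the new vertex $x$ to all of $B$, extends $g$ to an automorphism of $G'$ fixing $x$, and concludes that $A\cup\{x\}$ is a largest non-fixing set of $G'$; this keeps $G'$ connected, but it leaves essentially unargued the point that passing to $G'$ creates no larger non-fixing set (i.e., no new automorphisms with more fixed points). You instead control the entire automorphism group: by making the new vertex the unique vertex of extremal degree and its neighbourhood $Aut(G)$-invariant, you force $Aut(G')\cong Aut(G)$ with the new vertex fixed by everything, so the fixed-point set of each non-trivial automorphism grows by exactly one; combined with the identity $fxd(H)=1+\max_{g\neq id}|\mathrm{Fix}(g)|$ (a clean restatement of Remark \ref{NonFixingSets}(i), valid since a non-trivial permutation moves at least two points) this yields $fxd(G')=r+1$ with no loose ends. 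The price is the case split on whether $G$ has a universal vertex and, in that case, a disconnected $G'$; since the paper's standing convention is that graphs are connected (and the paper's own construction preserves connectivity), if connectivity of $G'$ is insisted upon your construction needs the extra patch you sketch, which is less routine than you suggest --- for instance $G$ may have several universal vertices, and then a pendant vertex cannot be hung on ``the'' universal vertex without destroying the $Aut(G)$-invariance of the attachment set. Apart from that caveat your proof is sound and, if anything, tighter than the paper's.
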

\begin{proof}
Since $fxd(G)=r$, so $G$ has a largest non-fixing set $A$ of
cardinality $|A|=r-1$. By Remark \ref{NonFixingSets}(ii) for each
non-trivial $g\in stab(A)$, there exist at least one set $B\subset
V(G)$ such that $u\sim^g v$ for all $u,v\in B$. Consider
$B=\{v_1,v_2,...,v_l\}$. Take a $K_1=\{x\}$ and join $x$ with
$v_1,v_2,...,v_l$ by edges $xv_1,xv_2,...,xv_l$. We call new graph
$G'$. This completes construction of $G'$. We will now find a
largest non-fixing subset of $G'$. Since $v_i\sim^g v_j$ $(i\neq j,
1\leq i,j \leq l)$ in $G$ and $x$ is adjacent to $v_1,v_2,...,v_l$
in $G'$. So we can find a $g'\in Aut(G')$ such that
$$ g'(u) = \left\{
            \begin{array}{ll}
           \, x &         \,\,\ \mbox{if}\,\ u=x, \\
           \, g(u) &\,\,\ \mbox{if}\,\ u\neq x \\
            \end{array}
             \right.
$$
in $G'$. Clearly, $g'\in stab(x)\cap stab(A)=stab(\{x\}\cup A )$ and
$v_i\sim^{g'} v_j$ $(i\neq j, 1\leq i,j \leq l)$ in $G'$. Since $g'$
is non-trivial and $A$ is a largest non-fixing set in $G$, so $A\cup
\{x\}$ is a largest non-fixing set in $G'$. Hence by Remark
\ref{NonFixingSets}(i), $fxd(G')=|A\cup\{x\}|+1=r+1$
\end{proof}

The following lemma is useful for finding the fixing number of a
tree.
\begin{Lemma}\label{lma1}
\cite{EH}Let $T$ be a tree and $F\subset V(T)$, then $F$ fixes $T$
if and only if $F$ fixes the end vertices of $T$.
\end{Lemma}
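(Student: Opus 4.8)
The plan is to prove both directions of the biconditional in Lemma~\ref{lma1} by exploiting the rigidity of tree automorphisms away from the leaves. The forward direction is essentially trivial: if $F$ fixes $T$, then $stab(F)$ is trivial, so in particular every automorphism in $stab(F)$ fixes every vertex of $T$, including the end vertices; there is nothing to do. The substance is in the converse, so I would state the forward direction in one sentence and concentrate on the reverse implication.

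For the converse, suppose $F$ fixes the set $L$ of end vertices of $T$, meaning every $g \in stab(F)$ satisfies $g(\ell) = \ell$ for all $\ell \in L$. I want to show $stab(F)$ is trivial, i.e.\ any such $g$ fixes \emph{every} vertex of $T$. The key structural fact is that in a tree, for any two vertices $x, y$ there is a unique path between them, and any automorphism maps the $x$--$y$ path isometrically onto the $g(x)$--$g(y)$ path. Now take an arbitrary internal vertex $v$. I would use that $v$ lies on the unique path between some pair of leaves $\ell_1, \ell_2$ (every internal vertex of a tree separates at least two leaves — one can see this by extending two edge-disjoint paths from $v$ outward until they terminate at leaves). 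Since $g$ fixes $\ell_1$ and $\ell_2$, it maps the $\ell_1$--$\ell_2$ path onto itself; because an automorphism preserves distance, $g(v)$ is the unique vertex on that path at distance $d(\ell_1, v)$ from $\ell_1$, which is $v$ itself. Hence $g(v) = v$ for all $v$, so $g = id$ and $stab(F)$ is trivial, i.e.\ $F$ fixes $T$.

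The main obstacle — really the only point requiring care — is the claim that every internal vertex of a tree lies on a path joining two leaves, and the handling of the degenerate cases (a tree that is a single edge, or the trivial tree $K_1$, where the statement is vacuous or immediate). I would dispatch this by a short argument: starting from an internal vertex $v$ with $\deg(v) \ge 2$, pick two distinct neighbors and walk away from $v$ along each branch, always moving to an unvisited vertex; since $T$ is finite and acyclic this walk cannot revisit vertices and must terminate, and it can only terminate at a vertex of degree $1$, i.e.\ a leaf. The two resulting leaves are distinct because they lie in different components of $T - v$. Once this is in place the distance-preservation argument closes everything immediately, so I expect the whole proof to be about a paragraph of actual work.
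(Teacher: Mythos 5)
The paper states this lemma as a quoted result from \cite{EH} and supplies no proof of its own, so there is no in-paper argument to compare against; your proof is correct and is the standard one. Both directions are handled properly: the forward implication is indeed immediate from the definition of a fixing set, and the converse correctly combines the two key facts that every internal vertex of a tree lies on the unique path between two leaves (via the branch-walking argument) and that an automorphism fixing the endpoints of a unique path must fix that path pointwise because it preserves distances.
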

\begin{Theorem}\label{thm5}
For every integers $p$ and $q$ with $2\leq p \leq q$, there exists a
graph $G$ with $fix(G)=p$ and $fxd(G)=q$.
\end{Theorem}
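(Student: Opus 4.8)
The plan is to construct, for each pair $2\le p\le q$, a graph $G$ with $fix(G)=p$ and $fxd(G)=q$ by starting from a graph with a prescribed fixing number and then iterating the construction of Theorem \ref{ThmDtrConstruction} to push the fixed number up without disturbing the fixing number. First I would exhibit a ``base'' family of graphs $H_p$ with $fix(H_p)=fxd(H_p)=p$ for every $p\ge 2$; a natural candidate is a disjoint-like arrangement of $p$ mutually non-isomorphic asymmetric pieces attached to a common root, or more simply a suitable tree. Using Lemma \ref{lma1} (the end-vertex lemma for trees), one can take a ``spider'' or a bunch of pendant paths so that the only automorphisms permute pairs of leaves, forcing every fixing set to contain one vertex from each of $p$ twin-leaf pairs; choosing the pairs so that no $p-1$ of them already fix everything gives a $p$-fixed graph. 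I would verify $fix(H_p)=p$ via Lemma \ref{lma1} and $fxd(H_p)=p$ by checking that any $(p-1)$-set misses some twin pair and hence (Proposition \ref{prop3}) fails to be fixing.

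Next, starting from $G_0=H_p$ with $fxd(G_0)=p$, I would apply Theorem \ref{ThmDtrConstruction} repeatedly: each application produces $G_{i+1}$ with $fxd(G_{i+1})=fxd(G_i)+1$. After $q-p$ steps we obtain a graph $G:=G_{q-p}$ with $fxd(G)=q$. The remaining and genuinely delicate point is that these $q-p$ vertex additions must not change the fixing number from $p$. The construction in Theorem \ref{ThmDtrConstruction} adds a vertex $x$ joined to a similarity block $B=\{v_1,\dots,v_l\}$ of a largest non-fixing set; I would argue that every minimum fixing set of $G_i$ remains a fixing set of $G_{i+1}$ (the new vertex $x$ is fixed once $B$ is setwise fixed, which happens once the old fixing set is used), and conversely that no smaller set fixes $G_{i+1}$, because restricting any fixing set of $G_{i+1}$ to $V(G_i)$ must already fix $G_i$. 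This gives $fix(G_{i+1})=fix(G_i)=p$ by induction, so $fix(G)=p$ and $fxd(G)=q$.

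The main obstacle I anticipate is precisely this stability of the fixing number under the Theorem \ref{ThmDtrConstruction} construction: one must ensure that the block $B$ chosen at each step is such that attaching a pendant vertex to it does not accidentally create new symmetries that a size-$p$ set can no longer kill, nor collapse old ones so that a size-$(p-1)$ set suddenly suffices. To control this I would be careful to choose, at each stage, a similarity block $B$ lying entirely inside the ``symmetric core'' that the base construction already uses, so that $\mathrm{Aut}(G_{i+1})$ is essentially $\mathrm{Aut}(G_i)$ with the added vertex $x$ rigidly determined by $B$; then $stab_{G_{i+1}}(F)\cong stab_{G_i}(F)$ for the relevant sets $F$, and both equalities $fix=p$, $fxd=q$ follow. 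A clean alternative, if the bookkeeping gets heavy, is to do the whole thing in one shot: directly build $G$ as $p$ twin-leaf pairs plus a path of length $q-p$ of successively attached pendant vertices along a symmetric edge, and compute $fix$ and $fxd$ from scratch using Lemma \ref{lma1}, Proposition \ref{prop3}, and Remark \ref{NonFixingSets}; I would present whichever of the two is shorter.
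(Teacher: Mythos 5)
There is a genuine gap, and it sits exactly where you flagged the ``delicate point.'' First, your proposed base family $H_p$ cannot exist in the form you describe: by Theorem \ref{thm2}, any graph containing a pair $u,v$ with $N(u)\setminus\{v\}=N(v)\setminus\{u\}$ (in particular any twin-leaf pair) has $fxd=n-1$, so a tree built from $p$ twin-leaf pairs has $fxd=n-1\geq 2p>p$ and is never $p$-fixed. Essentially the only obvious $p$-fixed base is $K_{p+1}$. Second, and more seriously, the iteration does \emph{not} preserve the fixing number. Take $G_0=K_{p+1}$: a largest non-fixing set $A$ has size $p-1$, the block $B$ is the remaining two vertices $\{v_1,v_2\}$, and the construction of Theorem \ref{ThmDtrConstruction} attaches a pendant vertex $x$ adjacent to both. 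In $G_1$ the vertex $x$ is the unique vertex of degree $2$, so every automorphism fixes $x$, preserves $\{v_1,v_2\}$, and permutes the other $p-1$ vertices freely; hence $fix(G_1)=(p-2)+1=p-1$. The fixing number \emph{drops}, so the inductive claim $fix(G_{i+1})=fix(G_i)$ fails at the very first step, and no choice of ``symmetric core'' rescues it here since $K_{p+1}$ has only one orbit.

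Your one-shot alternative is much closer to what the paper actually does, but as stated it also misses: with $p$ twin-leaf pairs plus a path of $q-p$ vertices the order is $p+q$, and since twins force $fxd=n-1$ you would get $fxd=p+q-1$, not $q$. The paper's construction uses a \emph{single} twin class: a path $w_1,\dots,w_{q-p}$ with $p+1$ pendant leaves $u_1,\dots,u_{p+1}$ all attached at $w_1$, so that $n=q+1$. Then $fix(G)=p$ (fix all but one of the mutually similar leaves, using Lemma \ref{lma1}), and $fxd(G)=n-1=q$ by Theorem \ref{thm2} (equivalently, $V(G)\setminus\{u_p,u_{p+1}\}$ is a non-fixing set of size $q-1$). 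If you want to salvage your write-up, replace both of your constructions with this broom, or at least re-tune your alternative so that there is exactly one twin class of size $p+1$ and the total order is $q+1$.
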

\begin{proof}
For $p=q$, $G=K_{p+1}$ will have the desired property. So we
consider $2\leq p < q$. Consider a graph $G$ obtained from a path
$w_{1},w_{2},...,w_{q-p}$. Add $p+1$ vertices
$u_{1},u_{2},...,u_{p+1}$ and $p+1$ edges
$w_{1}u_{1},w_{1}u_{2},...,w_{1}u_{p+1}$ with $w_{1}$. Thus $|V(G)|
= q+1$. Consider set $F\subset V(G)$, $F=\{u_{1},u_{2},...,u_{p}\}$
, then $F$ fixes the set of end vertices
$\{u_{1},u_{2},...,u_{p},u_{p+1}\}$ of $G$. As $G$ is a tree and
$w_{p-q}$ is a fixed end vertex, hence $F$ fixes $G$ by Lemma
\ref{lma1}. Since $F$ is a minimum fixing set for $G$, so
$fix(G)=|F|=p$. Also $fxd(G)=q$ because
$U=\{w_{1},w_{2},...,w_{q-p},u_{1},u_{2},...,u_{p-1}\}$ is the
largest non-fixing set with cardinality $q-1$.
\end{proof}

\section{The Fixing Graph}

\par Let $G$ be a connected graph. The set of fixed vertices of
 $G$ has no contribution in constructing the fixing sets of
  $G$, therefore we define a vertex set $S(G)=\{v\in V(G): v\sim u$
  for some $u(\neq v)\in V(G)\}$ (set of
all vertices of $G$ which are more than one in their orbits). Also
consider $V_s(G)=\{(u,v):u\sim v$ $(u\neq v)$ and $u,v\in V(G)\}$.
Also, if $G$ is an asymmetric graph, then $V_s(G)=\emptyset$. Let
$x\in V(G)$, an arbitrary automorphism $g\in stab(x)$ is said to
\emph{fix a pair} $(u,v)\in V_s(G)$, if $u\not\sim^g v$. If
$(u,v)\not \in V_s(G)$, then $u\not\sim v$ and hence question of
fixing pair $(u,v)$ by a $g\in stab(x)$ does not arise. In this
section, we use $r$ and $s$ to denote $|S(G)|$ and $|V_s(G)|$
respectively. It is clear that $r\le n$ and $\frac{r}{2}\le s\le {r
\choose 2}\le {n\choose 2}$ where $s$ attains its lower bound in
later inequality in case when $r$ is even and pair $(u,v)$ is only
fixed by automorphisms in $stab\{u,v\}$ for all $(u,v)\in V_s(G)$.
Consider the graph $G_1$ in Figure 2 where $r=6$ and $s=7$. $G_1$
has a fixed vertex $v_1$ and $S(G_1)=\{v_2,v_3,v_4,v_5,v_6,v_7\}$
and
$V_s(G_1)=\{(v_2,v_3),(v_4,v_5),(v_4,v_6),(v_4,v_7),(v_5,v_6),(v_5,v_7),(v_6,v_7)\}$.
Since superset of a fixing set is also a fixing set, so we are
interested in fixing set of minimum cardinality. Following remarks
tell us the relation between a fixing set $F$ and $S(G)$.
\begin{Remark}\label{RemarkSG}
Let $G$ be a graph. A set $F\subset V(G)$ is a minimum
 fixing set of $G$, if $F\subset S(G)$ and an arbitrary $g\in stab(F)$ fixes $S(G)$.
\end{Remark}
 The $Fixing$ $Graph$, $D(G)$, of a
graph $G$ is a bipartite graph with bipartition $(S(G),V_s(G))$. A
vertex $x\in S(G)$ is adjacent to a pair $(u,v)\in V_s(G)$ if
$u\not\sim^g v$ for $g\in stab(x)$.
\begin{figure}[h]
       \centerline
       {\includegraphics[width=12cm]{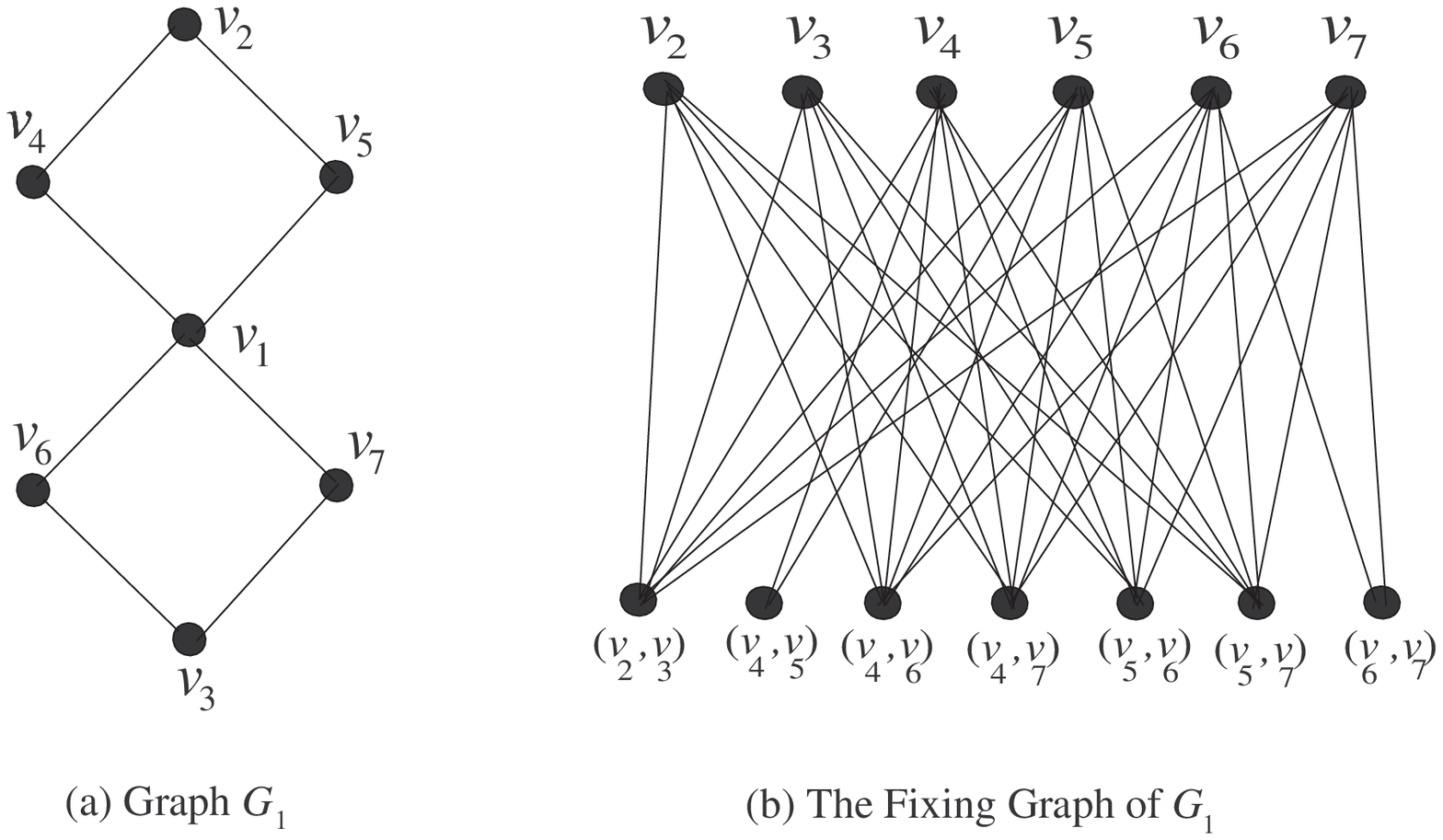}}
        \caption{}\label{fig1}
\end{figure}
Let $F\subseteq S(G)$, then $N_{D(G)}(F)=\{(x,y)\in V_{s}(G) |$
$x\not\sim^g y$ for $g\in stab(F)\}$. In the fixing graph, $D(G)$,
the minimum cardinality of a subset $F$ of $S(G)$ such that
$N_{D(G)}(F)=V_{s}(G)$ is the fixing number of $G$. Figure 2(b)
shows the fixing graph of graph $G_1$ given in Figure 2(a). As
$N_{D(G_1)}\{v_4,v_6\}= V_s(G_1)$, thus $\{v_4,v_6\}$ is a fixing
set of $G_1$ and hence $fix(G_1)=2$.

\begin{Remark}\label{PropFixSetDetermingGraph}
Let $G$ be graph and $F\subset S(G)$ be a fixing set of $G$, then
$N_{D(G)}(F)=V_s(G)$.
\end{Remark}

Also $\{v_1,v_2,v_3,v_4,v_5\}$ is a largest non-fixing set of $G_1$.
In fact every largest non-fixing set must have fixed vertex $v_1$.
So we have following proposition
\begin{Proposition}\label{PropLargestNonFixing}
Let $G$ be a graph and $A$ be a largest non-fixing subset of $G$.
Then $A$ contains all fixed vertices of $G$.
\end{Proposition}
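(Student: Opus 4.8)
The plan is to argue by contradiction, exploiting the defining property of a fixed vertex: $v$ is fixed precisely when $stab(v)=Aut(G)$. The key elementary observation I would record first is that for any $A\subseteq V(G)$ we have $stab(A)=\bigcap_{u\in A}stab(u)$, so adjoining to $A$ a vertex whose stabilizer is all of $Aut(G)$ cannot change $stab(A)$ at all.

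Concretely, suppose $A$ is a largest non-fixing subset of $G$ but that some fixed vertex $v$ of $G$ satisfies $v\notin A$. Put $A'=A\cup\{v\}$. Since $v$ is fixed, $stab(v)=Aut(G)$, and therefore $stab(A')=stab(A)\cap stab(v)=stab(A)\cap Aut(G)=stab(A)$. As $A$ is non-fixing, $stab(A)$ contains a non-trivial automorphism, hence so does $stab(A')$; that is, $A'$ is also non-fixing. But $|A'|=|A|+1$, contradicting the maximality of $A$ among non-fixing subsets of $G$.

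It follows that no fixed vertex can lie outside $A$, i.e. $A$ contains every fixed vertex of $G$, which is the assertion of the proposition. I would also note, for completeness, that under the paper's standing hypothesis $Aut(G)\neq\{id\}$ the empty set is itself a non-fixing set, so a largest non-fixing subset exists and the statement is not vacuous.

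I do not anticipate any genuine obstacle here; the whole content is the remark that a fixed vertex contributes nothing to the stabilizer of a set, which is immediate from the intersection formula for $stab$. The only thing worth stating carefully is that this lets one enlarge any non-fixing set by a fixed vertex while preserving non-fixedness, which is exactly what the contradiction needs.
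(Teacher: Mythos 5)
Your proof is correct and is essentially identical to the paper's: both argue by contradiction, adjoin the missing fixed vertex $v$ to $A$, and use $stab(A\cup\{v\})=stab(A)\cap stab(v)=stab(A)\cap Aut(G)=stab(A)\neq\{id\}$ to contradict the maximality of $A$. No gaps; your added remark on non-vacuousness is a harmless bonus.
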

\begin{proof}
Let $x\in V(G)$ be an arbitrary fixed vertex of $G$. Suppose on
contrary $x\not\in A$. Then $stab(A\cup \{x\})=stab(A)\cap
stab(x)=stab(A)\cap Aut(G)=stab(A)\neq \{id\}$ ($A$ is non-fixing
set). Consequently $A\cup \{x\}$ is non-fixing set, a contradiction
that $A$ is largest non-fixing set.
\end{proof}
 Let $t$ be the minimum number such that $1\le t\le r$ and every
$t$-subset $F$ of $S(G)$ has $N_{D(G)}(F)= V_s(G)$, then $t$ is
helpful in finding the fixed number of a graph $G$. The following
theorem gives a way of finding fixed number of a graph using its
fixing graph.
\begin{Theorem}\label{dtrViaDtrmingGraph}
Let $G$ be a graph of order $n$ and $t$ $(1\le t\le r)$ be the
minimum number such that every $t$-subset of $S(G)$ has neighborhood
$V_s(G)$ in $D(G)$. Then
$$ fxd(G)=t+|V(G)\setminus S(G)|$$
\end{Theorem}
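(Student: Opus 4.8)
The plan is to connect the quantity $fxd(G)$ with the largest non-fixing set via Remark~\ref{NonFixingSets}(i), and then decompose any largest non-fixing set into its intersection with $S(G)$ and its intersection with $V(G)\setminus S(G)$. First I would observe that, since every vertex of $V(G)\setminus S(G)$ is a fixed vertex (it is alone in its orbit, hence every automorphism fixes it), Proposition~\ref{PropLargestNonFixing} forces every largest non-fixing set $A$ to contain all of $V(G)\setminus S(G)$. Thus $A = A_0 \cup (V(G)\setminus S(G))$ where $A_0 = A\cap S(G)$, and $|A| = |A_0| + |V(G)\setminus S(G)|$. Moreover, adjoining the fixed vertices does not change the stabilizer: $stab(A) = stab(A_0)\cap stab(V(G)\setminus S(G)) = stab(A_0)\cap Aut(G) = stab(A_0)$. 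Hence $A$ is a largest non-fixing set of $G$ if and only if $A_0$ is a largest subset of $S(G)$ that is non-fixing, i.e. $A_0$ is a largest subset $F$ of $S(G)$ with $stab(F)\neq\{id\}$.

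Next I would translate the condition $stab(F)\neq\{id\}$ for $F\subseteq S(G)$ into the language of the fixing graph. By the discussion preceding the theorem (and Remark~\ref{PropFixSetDetermingGraph}), for $F\subseteq S(G)$ we have $stab(F)=\{id\}$ exactly when $N_{D(G)}(F)=V_s(G)$: if some nontrivial $g\in stab(F)$ exists, then by Remark~\ref{NonFixingSets}(ii) there is a pair $(u,v)\in V_s(G)$ with $u\sim^g v$, so $(u,v)\notin N_{D(G)}(F)$; conversely if $N_{D(G)}(F)\neq V_s(G)$, pick $(u,v)\in V_s(G)\setminus N_{D(G)}(F)$, which means no $g\in stab(F)$ separates $u$ and $v$, and since $u\sim v$ via some automorphism — one can push this, using that $u\sim^h v$ and $h$ can be composed appropriately, to produce a nontrivial element of $stab(F)$ moving $u$ to $v$. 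So $F\subseteq S(G)$ is non-fixing $\iff N_{D(G)}(F)\neq V_s(G)$.

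Now I would bring in $t$. By definition $t$ is the minimum number such that every $t$-subset of $S(G)$ has neighborhood $V_s(G)$; equivalently, $t-1$ is the maximum cardinality of a subset of $S(G)$ whose neighborhood is \emph{not} all of $V_s(G)$ — i.e., by the equivalence above, $t-1$ is the maximum cardinality of a non-fixing subset of $S(G)$. Combining with the first paragraph, the largest non-fixing subset of $G$ has cardinality $r' := (t-1) + |V(G)\setminus S(G)|$. Finally, by Remark~\ref{NonFixingSets}(i), $fxd(G) = r' + 1 = (t-1) + |V(G)\setminus S(G)| + 1 = t + |V(G)\setminus S(G)|$, which is the claimed formula. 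One should double-check the two boundary situations: if $S(G)=V(G)$ then $|V(G)\setminus S(G)|=0$ and the statement reduces to $fxd(G)=t$, consistent with the fixing-graph characterization of $fix$ and $fxd$; if $G$ is asymmetric the hypothesis $Aut(G)\neq\{id\}$ excludes it, so $S(G)\neq\emptyset$ and $t\ge 1$ is well-defined.

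The main obstacle I anticipate is the careful verification of the equivalence ``$F\subseteq S(G)$ non-fixing $\iff N_{D(G)}(F)\neq V_s(G)$,'' specifically the direction that produces a genuine nontrivial automorphism in $stab(F)$ from an unseparated similar pair $(u,v)$: one needs that an automorphism $g\in stab(F)$ with $g(u)=v$ actually exists (not merely that no $g\in stab(F)$ has $g(u)\ne v$), which uses the definition of adjacency in $D(G)$ together with the fact that $u\sim v$ globally. This is essentially the content behind Remark~\ref{NonFixingSets}(ii) and the paragraph defining $D(G)$, so I expect it to go through, but it is the step that deserves the most care. The rest — peeling off fixed vertices and re-expressing $t$ in terms of maximal non-fixing subsets of $S(G)$ — is bookkeeping.
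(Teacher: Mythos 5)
Your proposal is correct and follows essentially the same route as the paper: both arguments reduce the computation of $fxd(G)$ to finding the largest non-fixing set via Remark~\ref{NonFixingSets}(i), use Proposition~\ref{PropLargestNonFixing} to show that set must contain all of $V(G)\setminus S(G)$, and identify its intersection with $S(G)$ as a $(t-1)$-set whose neighborhood in $D(G)$ misses part of $V_s(G)$. The equivalence you flag as the delicate step follows directly from the definition of adjacency in $D(G)$ (non-adjacency of $F$ to $(u,v)$ means some $g\in stab(F)$ sends $u$ to $v$, and such a $g$ is automatically nontrivial since $u\neq v$), so your argument closes without further work.
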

\begin{proof}
By Remark \ref{NonFixingSets}(i), we find a largest non-fixing
subset $T$ of $V(G)$. By Proposition \ref{PropLargestNonFixing}
$V(G)\setminus S(G)$ is a subset of largest non-fixing set $T$.
Moreover, by hypothesis there is a $(t-1)$-subset $U$ of $S(G)$ such
that $N_{D(G)}(U)\neq V_s(G)$. Then $U$ is non-fixing set for $G$
and hence $\{V(G)\setminus S(G)\}\cup U$ is a non-fixing set. Also
$\{V(G)\setminus S(G)\}\cup U$ is a largest non-fixing set of $G$,
because by hypothesis, a $t$-subset of $S(G)$ forms a fixing set of
$G$. Further $\{V(G)\setminus S(G)\}\cap U=\emptyset$. Hence by
Remark \ref{NonFixingSets}(i), $$ fxd(G)=|V(G)\setminus
S(G)|+|U|+1=|V(G)\setminus S(G)|+t $$

\end{proof}
% In Proposition \ref{prop2}, we
%find lower bound on the cardinality of the edge set of the
%determining graph of a graph $G$. The following proposition which we
%have proved in \cite{IJH} is useful for our later work.
In \cite{IJH}, we found an upper bound on the cardinality of edge
set $|E(D(G))|$ of fixing graph $D(G)$ of a graph $G$.
\begin{Proposition}\label{prop1}\cite{IJH}
Let $G$ be a $k$-fixed graph of order $n$, then
\begin{equation}\label{eq5} |E(D(G))|\leq n({n \choose 2}-k+1).\end{equation}
\end{Proposition}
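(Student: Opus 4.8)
The plan is to bound $|E(D(G))|$ by summing vertex degrees on the $S(G)$-side of the bipartite graph $D(G)$. Write $r=|S(G)|$ and $s=|V_s(G)|$, so that $r\le n$ and $s\le\binom{n}{2}$, and $|E(D(G))|=\sum_{x\in S(G)}\deg_{D(G)}(x)$. Thus it suffices to prove that $\deg_{D(G)}(x)\le s-(k-1)\le\binom{n}{2}-k+1$ for every $x\in S(G)$; multiplying by $r\le n$ then yields the claimed inequality.

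To evaluate $\deg_{D(G)}(x)$, recall that $x$ is \emph{non}-adjacent to a pair $\{u,v\}\in V_s(G)$ precisely when some $g\in stab(x)$ satisfies $g(u)=v$, i.e.\ when $u$ and $v$ lie in a common orbit of the action of $stab(x)$ on $V(G)$. (Any such pair automatically lies in $V_s(G)$, since $g\in Aut(G)$ witnesses $u\sim v$.) Hence the number of pairs non-adjacent to $x$ is exactly $m_x:=\sum_{O}\binom{|O|}{2}$, the sum running over the orbits $O$ of $stab(x)$, and $\deg_{D(G)}(x)=s-m_x$. It remains to show $m_x\ge k-1$.

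This lower bound on $m_x$ is the main obstacle, and I would obtain it by exhibiting an explicit fixing set built around $x$. Let $O_1,\dots,O_j$ be the orbits of $stab(x)$ of size at least $2$, choose one vertex $p_i\in O_i$, and put $F=\{x\}\cup\bigcup_{i=1}^{j}(O_i\setminus\{p_i\})$. Any $g\in stab(F)$ fixes $x$, hence $g\in stab(x)$ and so $g$ permutes each $O_i$; since $g$ fixes $|O_i|-1$ points of $O_i$ it fixes all of $O_i$, and it fixes the size-$1$ orbits automatically, so $g=id$. Thus $F$ is a fixing set, and because $G$ is $k$-fixed we have $fix(G)=k$, giving $|F|=1+\sum_{i=1}^{j}(|O_i|-1)\ge k$, i.e.\ $\sum_i(|O_i|-1)\ge k-1$. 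Since $\binom{\ell}{2}\ge\ell-1$ for every $\ell\ge2$, we conclude $m_x=\sum_i\binom{|O_i|}{2}\ge\sum_i(|O_i|-1)\ge k-1$. (If $stab(x)$ is trivial the relevant sums are empty, which forces $k=1$ and makes the estimate vacuous; and for $k\le1$ the statement is immediate from $r\le n$, $s\le\binom{n}{2}$.)

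Finally I would assemble the pieces:
\[
|E(D(G))|=\sum_{x\in S(G)}\bigl(s-m_x\bigr)\le r\,(s-k+1)\le n\!\left(\binom{n}{2}-k+1\right).
\]
The only genuine content is the orbit-counting identity $\deg_{D(G)}(x)=s-m_x$ together with the construction of the fixing set $F$ that converts the hypothesis $fix(G)=k$ into the bound $m_x\ge k-1$; everything else is bookkeeping with $r\le n$ and $s\le\binom{n}{2}$.
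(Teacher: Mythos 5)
Your proof is correct. Note that the paper itself gives no argument for this proposition --- it is quoted from \cite{IJH} without proof --- so there is nothing in the text to compare against line by line; what you have written is a legitimate self-contained derivation. Your two key steps both check out: (i) the orbit-counting identity $\deg_{D(G)}(x)=s-\sum_{O}\binom{|O|}{2}$ is right, since $x$ is non-adjacent to $\{u,v\}$ exactly when some $g\in stab(x)$ sends $u$ to $v$, i.e.\ when $u,v$ share a $stab(x)$-orbit, and every such pair automatically lies in $V_s(G)$; and (ii) the set $F=\{x\}\cup\bigcup_i(O_i\setminus\{p_i\})$ really is a fixing set, because any $g\in stab(F)$ lies in $stab(x)$, hence preserves each orbit $O_i$ setwise and, fixing all but one point of $O_i$, fixes the last one too (and $x$'s own orbit is the singleton $\{x\}$, so there is no double counting in $|F|=1+\sum_i(|O_i|-1)$). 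The hypothesis is used only through $fix(G)=k$, which gives $|F|\ge k$ and hence $m_x\ge k-1$; the sign condition $s-k+1\ge 0$ needed to multiply by $r\le n$ follows from $m_x\le s$. It is worth observing that your argument is precisely the dual of the proof the paper does give for the companion lower bound $\frac{r}{2}(r-k+1)\le|E(D(G))|$: there the authors bound degrees on the $V_s(G)$ side using $fxd(G)=k$ (a small non-neighborhood in $S(G)$ would be a large non-fixing set), whereas you bound degrees on the $S(G)$ side using $fix(G)=k$ (a small non-neighborhood in $V_s(G)$ would yield a small fixing set). Your route arguably gives a little more, since it proves the upper bound under the weaker hypothesis $fix(G)=k$ alone, without needing $fxd(G)=k$.
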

%\begin{proof}
%We have $r\leq n$ and $s\leq {r\choose 2}\leq {n\choose 2}$. Let
%$v\in S(G)$. Our claim is $deg_{D(G)}(v)\leq s-k+1$. Suppose
%$deg_{D(G)}(v)\geq s-k+2$, then there are at most $k-2$ pairs in
%$V_s(G)$ which are not adjacent to $v$. Let $V_s(G)\backslash
%N_{D(G)}(v)=\{(u_{1},v_{1}),(u_{2},v_{2}),...,(u_{t},v_{t})\}$,
%where $t\leq k-2$. Note that, $u_{i}\not\sim^g v_{i}$ by an
%arbitrary $g\in stab(u_i)$ for each $i$, $1\leq i \leq t$. Hence,
%$u_{i}$ is adjacent to pair $(u_{i},v_{i})$ in $D(G)$ for each $i$,
%$1\leq i \leq t$. Therefore,
%$N_{D(G)}(\{v,u_{1},u_{2},...,u_{t}\})=V_s(G)$. Hence, $fix(G) \leq
%t+1 \leq k-1 $, which is a contradiction. Thus, $deg_{D(G)}(v) \leq
%s-k+1\leq {r \choose 2}-k+1$ and consequently, $|E(D(G))| \leq r({r
%\choose 2}-k+1)\leq n({n \choose 2}-k+1)$.
%\end{proof}
Now we find lower bound on $|E(D(G))|$.
\begin{Proposition}\label{prop1}
If $G$ is a $k$-fixed graph of order $n$, then \[
(\frac{r}{2})(r-k+1)\leq |E(D(G))| \]
\end{Proposition}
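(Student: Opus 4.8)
The plan is to bound $|E(D(G))|$ from below by summing the degrees of the vertices on the $V_s(G)$ side of the bipartite graph $D(G)$ and showing that \emph{every} such vertex has large degree. Since $\sum_{(u,v)\in V_s(G)}\deg_{D(G)}((u,v))=|E(D(G))|$ and $|V_s(G)|=s\ge \frac r2$, it will suffice to prove that each pair in $V_s(G)$ has degree at least $r-k+1$ in $D(G)$; in fact I expect to get the slightly stronger bound $n-k+1$ and then weaken it using $r\le n$. Note that only the equality $fxd(G)=k$ is used, not the full $k$-fixed hypothesis.

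First I would translate the hypothesis via Theorem \ref{dtrViaDtrmingGraph}. Let $t$ ($1\le t\le r$) be the minimum number such that every $t$-subset of $S(G)$ has neighbourhood $V_s(G)$ in $D(G)$. Since $fxd(G)=k$ and $|V(G)\setminus S(G)|=n-r$, that theorem gives $k=t+(n-r)$, hence $r-t+1=n-k+1$.

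Next comes the degree estimate, which I expect to be the crux. Fix an arbitrary pair $(u,v)\in V_s(G)$ and let $M=S(G)\setminus N_{D(G)}((u,v))$ be the set of vertices of $S(G)$ \emph{not} adjacent to $(u,v)$ in $D(G)$. If $|M|\ge t$, choose a $t$-subset $F\subseteq M$; then no vertex of $F$ is adjacent to $(u,v)$, so $(u,v)\notin N_{D(G)}(F)$ and therefore $N_{D(G)}(F)\ne V_s(G)$, contradicting the defining property of $t$. Hence $|M|\le t-1$, and consequently $\deg_{D(G)}((u,v))=|S(G)|-|M|=r-|M|\ge r-t+1=n-k+1$. (Since $fxd(G)\le n-1$ we have $k\le n-1$, so this bound is positive and the step is non-vacuous.)

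Finally I would assemble the pieces. Summing the last inequality over all pairs,
\[
|E(D(G))|=\sum_{(u,v)\in V_s(G)}\deg_{D(G)}((u,v))\ \ge\ s\,(n-k+1)\ \ge\ \tfrac r2\,(n-k+1)\ \ge\ \tfrac r2\,(r-k+1),
\]
using $s\ge\frac r2$ for the middle inequality and $r\le n$ for the last. The main obstacle is the per-pair degree lower bound of the third paragraph: it rests on reading off from Theorem \ref{dtrViaDtrmingGraph} that "every $t$-subset of $S(G)$ dominates $V_s(G)$" and converting this domination statement into a minimum-degree statement on the $V_s(G)$ side of $D(G)$; once that is in hand, the remainder is just arithmetic with $r$, $s$, $n$ and $k$.
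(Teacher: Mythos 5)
Your argument is correct and is essentially the paper's own proof: both lower-bound the degree of each pair $z\in V_s(G)$ in $D(G)$ by observing that the set of its non-neighbours in $S(G)$ fails to have neighbourhood $V_s(G)$ and hence, because $fxd(G)=k$, must be small, and then sum degrees over the $s\ge \frac{r}{2}$ pairs. Your detour through Theorem \ref{dtrViaDtrmingGraph} merely repackages this (and incidentally yields the marginally sharper per-pair bound $n-k+1$ in place of the paper's $r-k+1$), but the mechanism is identical.
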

\begin{proof}
Let $z\in V_{s}(G)$ and $A$ be a set of vertices of $S(G)$ which are
not adjacent to $z$. Since $N_{D(G)}(A)\neq V_{s}(G)$, therefore $A$
is a non-fixing set of $G$. Our claim is $deg_{D(G)}(z)\ge r-k+1$.
Suppose $deg_{D(G)}(z)\leq r-k$, then $|A|\geq k$, which contradicts
that $fxd(G)=k$ ($A$ is non-fixing set with $|A|\ge k$). Thus,
$deg_{D(G)}(z)\geq r-k+1$ and consequently,
\begin{equation}\label{eq8}
(\frac{r}{2})(r-k+1)\leq s(r-k+1)\leq |E(D(G))|.
\end{equation}
\end{proof}
Thus, on combining (\ref{eq5}) and (\ref{eq8}) we get
\begin{equation}\label{eq7}
(\frac{r}{2}) (r-k+1)\leq |E(D(G))| \leq n({n \choose 2}-k+1).
\end{equation}

\begin{Theorem}\label{thm4}
If $G$ is a $k$-fixed graph and $|S(G)|=r$, then $k\leq 3$ or $k
\geq r-1 $.
\end{Theorem}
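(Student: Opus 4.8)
The plan is to show that no $k$-fixed graph can have $k \in \{4, 5, \dots, r-2\}$, where $r = |S(G)|$. I would argue this by contradiction: suppose $G$ is $k$-fixed with $4 \le k \le r-2$, and derive an impossible configuration inside the fixing graph $D(G)$. The two facts I expect to be central are: (a) since $fix(G) = k$, every $(k-1)$-subset of $S(G)$ fails to dominate $V_s(G)$ in $D(G)$, equivalently every $(k-1)$-subset misses some pair $z \in V_s(G)$; and (b) since $fxd(G) = k$ as well, by Theorem~\ref{dtrViaDtrmingGraph} every $k$-subset of $S(G)$ does dominate $V_s(G)$, and moreover (applying Proposition~\ref{PropLargestNonFixing}) the fixed vertices are irrelevant so we may work entirely inside $S(G)$. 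In particular no non-fixing subset of $S(G)$ has size $\ge k$.

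First I would extract the structural consequence of $fxd(G) = k$: for every pair $z \in V_s(G)$, the set $A_z$ of vertices of $S(G)$ \emph{not} adjacent to $z$ in $D(G)$ is a non-fixing set, hence $|A_z| \le k-1$, i.e. $\deg_{D(G)}(z) \ge r - k + 1$ (this is exactly the computation in Proposition~\ref{prop1}). Next I would use $fix(G) = k$: there is a $(k-1)$-subset $U$ of $S(G)$ with $N_{D(G)}(U) \ne V_s(G)$, so some pair $z_0$ is adjacent to none of the $k-1$ vertices of $U$; thus $U \subseteq A_{z_0}$, giving $|A_{z_0}| \ge k-1$, and combined with the previous bound $|A_{z_0}| = k-1$ exactly, so $A_{z_0} = U$ and $\deg_{D(G)}(z_0) = r-k+1$. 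The heart of the argument is then to look at what happens when we enlarge $U$: for each vertex $w \in S(G) \setminus U$, the set $U \cup \{w\}$ has size $k$ and hence is a fixing set, so it dominates $V_s(G)$; since $U$ already dominates $V_s(G) \setminus \{z_0\}$ wait — it dominates everything except $z_0$, but the point is $w$ must then be adjacent to $z_0$ for every such $w$, which is consistent. So instead I would play the $(k-1)$-subsets against each other: consider replacing one element of $U$ by $w$. The set $(U \setminus \{u\}) \cup \{w\}$ has size $k-1$ and must therefore also be non-fixing, so it too misses some pair. Tracking which pairs get missed as $u$ ranges over $U$ and $w$ over $S(G) \setminus U$ should force either very few pairs (pushing toward $k \le 3$) or very few "free" vertices (pushing toward $k \ge r-1$); the dichotomy in the statement is exactly the two ways this counting can balance.

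The main obstacle I anticipate is making the middle counting argument tight enough to exclude \emph{all} of $4 \le k \le r-2$ rather than just, say, a neighborhood of the endpoints. One clean route: show that if $k \le r-2$ then there exist at least two "free" vertices outside $U$, say $w_1, w_2$; then $U$, together with the observation that $\{w_1, w_2\}$ (size $2 \le k-1$) extends many $(k-3)$-subsets, forces the pairs missed by various $(k-1)$-sets to lie in a $2$-element "kernel" of $V_s(G)$, and a $2$-element target set is dominated by any single well-chosen vertex, collapsing $fix(G)$ to at most $3$ — contradiction unless $k \le 3$. The delicate point is checking that the vertices one wants to use to dominate this small kernel genuinely lie in $S(G)$ and are distinct from the forced elements of $U$; this is where I would spend the most care, likely using Remark~\ref{NonFixingSets}(ii) to name an explicit orbit-pair $B$ associated to a nontrivial automorphism in $stab$ of the relevant $(k-1)$-set and arguing its vertices supply the needed dominating vertex. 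If that extraction is not immediate, a fallback is to invoke the edge-count inequalities~(\ref{eq7}): the lower bound $(\tfrac r2)(r-k+1) \le |E(D(G))|$ combined with the per-vertex lower bound $\deg(z) \ge r-k+1$ for \emph{every} $z \in V_s(G)$ (so actually $|E(D(G))| \ge s(r-k+1)$) and an upper bound on $s$ in terms of the dominating structure may already pinch $k$ into the two stated ranges.
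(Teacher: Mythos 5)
You set up exactly the right objects --- the fixing graph, the sets $A_z$ of non-neighbours of a pair $z$, the bound $|A_z|\le k-1$ from $fxd(G)=k$, and the observation that if a $(k-1)$-set $U$ misses a pair $z_0$ then $U\subseteq A_{z_0}$ forces $A_{z_0}=U$ --- but you stop one step short of the argument that actually closes the case $4\le k\le r-2$, and the substitutes you offer do not work. Your last observation, applied to \emph{every} $(k-1)$-subset rather than just one, is the whole proof: since $fix(G)=k$, every $(k-1)$-subset $R$ of $S(G)$ misses at least one pair $z_R$, and your uniqueness statement $A_{z_R}=R$ shows that distinct $(k-1)$-subsets miss disjoint (in particular distinct) sets of pairs. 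Hence the map $R\mapsto z_R$ is injective from the $\binom{r}{k-1}$ subsets of size $k-1$ into $V_s(G)$, giving $\binom{r}{k-1}\le s\le\binom{r}{2}$. Unimodality of the binomial coefficients then forces $k-1\le 2$ or $k-1\ge r-2$, i.e.\ $k\le 3$ or $k\ge r-1$. This is precisely the paper's proof (phrased there as: the sets $\overline{N}_{D(G)}(R)=V_s(G)\setminus N_{D(G)}(R)$ over all $(k-1)$-subsets $R$ are nonempty and pairwise disjoint).

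The routes you actually propose in place of this counting are gaps. The element-swapping argument ("replace one element of $U$ by $w$ and track which pairs get missed") is never made precise and does not by itself produce the dichotomy; the "$2$-element kernel" step is asserted, not derived, and it is not clear why the missed pairs of all $(k-1)$-sets should lie in a two-element set. The fallback via the edge inequalities (\ref{eq7}) is genuinely too weak: $\bigl(\tfrac r2\bigr)(r-k+1)\le |E(D(G))|\le n\bigl(\binom n2 -k+1\bigr)$ is satisfied for essentially the whole range of $k$ and cannot pinch $k$ into $\{1,2,3\}\cup\{r-1,r\}$; the per-pair degree bound $\deg_{D(G)}(z)\ge r-k+1$ alone likewise says nothing about which intermediate $k$ are excluded. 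So the proposal identifies the correct local facts but is missing the global count $\binom{r}{k-1}\le\binom{r}{2}$ and the unimodality step that turn them into the stated conclusion.
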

\begin{proof}
For each $R\subseteq S(G)$, let
$\overline{N}_{D(G)}(R)=V_{s}(G)\backslash N_{D(G)}(R)$. We claim
that, if $R,T\subseteq S(G)$ with $|R|=|T|=k-1$ and $R\neq T$, then
$\overline{N}_{D(G)}(R)\cap \overline{N}_{D(G)}(T)=\emptyset$.
Otherwise, there exists a pair $\{y,z\} \in
\overline{N}_{D(G)}(R)\cap \overline{N}_{D(G)}(T)$. Therefore,
$\{y,z\} \notin N_{D(G)}(R\cup T)$ and hence, $R\cup T$ is not a
fixing set of $G$. Since, $R\neq T$, $|R\cup T|>|T|=k-1$, which
contradicts that
$fxd(G)=k$. Thus, $\overline{N}_{D(G)}(R)\cap \overline{N}_{D(G)}(T)=\emptyset$.\\
Since, $fix(G)=k$, for each $R\subseteq S(G)$ with $|R|=k-1$,
$\overline{N}_{D(G)}(R)\neq \emptyset$. Now, let
$\Omega=\{R\subseteq S(G):$ $|R| =k-1\}$. Therefore,
\[|\bigcup_{R\in \Omega}\overline{N}_{D(G)}(R)|=\sum_{R\in\Omega}|\overline{N}_{D(G)}(R)|\geq
\sum_{R\in\Omega} 1 = {r \choose k-1}\] On the other hand,
$\bigcup_{R\in \Omega} \overline{N}_{D(G)}(R)\subseteq V_{s}(G)$.
Hence, $|\bigcup_{R\in \Omega}\overline{N}_{D(G)}(R)|\leq s \leq
{r\choose 2}$. Consequently, ${r \choose k-1} \leq {r \choose 2}$.
If $r \leq 4$, then $k \leq 3$. Now, let $r \geq 5$. Thus, $2 \leq
\frac{r+1}{2}$. We know that for each $a,b \leq \frac{n+1}{2}$,
${r\choose a}\leq {r\choose b}$ if and only if $a\leq b$. Therefore,
if $k-1 \leq \frac{r+1}{2}$, then $k-1 \leq 2$, which implies $k
\leq 3$. If $k-1 \geq \frac{r+1}{2}$, then $r-k+1 \leq
\frac{r+1}{2}$. Since ${r \choose r-k+1} = {r \choose k-1}$, we have
${r \choose r-k+1} \leq {r\choose 2}$ and consequently, $r-k+1 \leq
2$, which yields $k\geq r-1$.
\end{proof}
\section{The Distance-Transitive Graph}
We now study the fixed number in a class of graphs known as the
distance-transitive graphs. A graph $G$ is called
distance-transitive if $u,v,x,y\in V(G)$ satisfying $d(u,v)=d(x,y)$,
then there exist an automorphism $g\in Aut(G)$ such that $u\sim^g x$
and $v\sim^g y$. For example, the complete graph $K_n$, the cyclic
graph $C_n$, the Petersen graph, the Johnson graph etc, are
distance-transitive. For more about distance transitive graphs see
\cite{NBiggs}. In this section, we use terminology as described in
section 3 related to the fixing graph $D(G)$ of a graph $G$.
Following proposition given in \cite{NBiggs} tells that the distance
transitive graph does not have fixed vertices.
\begin{Proposition}\cite{NBiggs}
A distance-transitive graph is vertex transitive.
\end{Proposition}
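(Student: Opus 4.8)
The plan is to unpack the definition of distance-transitivity directly: given arbitrary vertices $u,x\in V(G)$, I want to exhibit an automorphism $g\in Aut(G)$ with $g(u)=x$, which is precisely what vertex-transitivity asserts.

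First I would dispose of the degenerate case in which $G$ has a single vertex, where vertex-transitivity holds trivially via the identity. So assume $|V(G)|\geq 2$. Since $G$ is connected, $u$ has at least one neighbor $u'$ and $x$ has at least one neighbor $x'$, so that $d(u,u')=1=d(x,x')$. Applying the distance-transitivity hypothesis to the quadruple $u,u',x,x'$ (which satisfies $d(u,u')=d(x,x')$) yields an automorphism $g\in Aut(G)$ with $u\sim^g x$ and $u'\sim^g x'$; in particular $g(u)=x$. As $u$ and $x$ were arbitrary, $G$ has a single orbit, i.e. $G$ is vertex-transitive.

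The only point requiring a small amount of care is ensuring that the pair of vertices fed into the distance-transitivity condition sits at a common, legitimate distance; choosing adjacent pairs (distance $1$) avoids any worry about whether the definition is meant to apply to the degenerate distance-$0$ case. If one does allow distance $0$, the argument collapses further: since $d(u,u)=0=d(x,x)$ for all $u,x\in V(G)$, distance-transitivity immediately supplies a $g$ with $g(u)=x$. Either way, there is no real obstacle here—the statement is an essentially immediate consequence of the definition, and the proof is a one-step deduction plus the trivial edge case.
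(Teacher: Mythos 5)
Your proof is correct: choosing a neighbor of each of $u$ and $x$ (or, even more directly, using $d(u,u)=0=d(x,x)$) and invoking the definition of distance-transitivity immediately produces an automorphism carrying $u$ to $x$. The paper itself gives no proof, citing the result from Biggs, and your argument is exactly the standard one-step deduction, with the edge cases handled appropriately.
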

Thus, if $G$ is a distance transitive graph, then $S(G)=V(G)$, $r=n$
and $V_s(G)$ consists of all $n\choose 2$ pairs of vertices of $G$
(i.e., $s= {n\choose 2}$). \begin{Corollary} Let $G$ be a
distance-transitive graph of order $n$. If $G$ is $k$-fixed, then
$k\leq 3$ or $k \geq n-1 $.
\end{Corollary}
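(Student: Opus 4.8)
The plan is to derive this as a direct consequence of Theorem~\ref{thm4} together with the observation that distance-transitivity forces $S(G)$ to be the whole vertex set. First I would invoke the Proposition just quoted: since $G$ is distance-transitive, it is vertex-transitive, so $V(G)$ is a single orbit. Under the paper's standing hypothesis that $Aut(G)\neq\{id\}$, this single orbit has size $n\geq 2$, hence $|\mathcal{O}(v)|=n>1$ for every $v\in V(G)$. Consequently every vertex is similar to some other vertex, which is exactly the condition defining membership in $S(G)$; therefore $S(G)=V(G)$ and $r=|S(G)|=n$.

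Next I would simply feed this into Theorem~\ref{thm4}. That theorem asserts that for any $k$-fixed graph $G$ with $|S(G)|=r$ one has $k\leq 3$ or $k\geq r-1$. Substituting $r=n$ yields $k\leq 3$ or $k\geq n-1$, which is the claim. No further argument is needed: the corollary is genuinely immediate once the identification $r=n$ is in place.

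The only point requiring a word of care — and the closest thing to an obstacle — is the passage from vertex-transitivity to $S(G)=V(G)$, since if $Aut(G)$ were trivial the single orbit would be a singleton and $S(G)$ would be empty. This is handled by the blanket assumption made in the Introduction that all graphs considered have $Aut(G)\neq\{id\}$, so the case $r=n$ is the only one that arises. (Alternatively, if $Aut(G)=\{id\}$ then $G$ is rigid, $fix(G)=fxd(G)=0$, and the inequality $k\leq 3$ holds trivially, so the statement survives in that degenerate case as well.) Everything else is a substitution, so the write-up should be only two or three sentences long.
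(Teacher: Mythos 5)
Your proposal is correct and follows exactly the paper's route: the paper likewise notes that distance-transitivity (via vertex-transitivity) gives $S(G)=V(G)$, hence $r=n$, and then applies Theorem~\ref{thm4}. Your extra remarks on why vertex-transitivity plus $Aut(G)\neq\{id\}$ force $r=n$ are a welcome bit of added care but do not change the argument.
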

\begin{proof}
Since $r=n$ for a distance-transitive graph, so result follows from
Theorem \ref{thm4}.
\end{proof}
Moreover an expression for bounds on $|E(D(G))|$ of a
distance-transitive and $k$-fixed graph $G$ can be obtained by
putting $r=n$ and $s={n\choose 2}$ in (\ref{eq8}) and use the result
in (\ref{eq7}), we get
\begin{equation}\label{eq9}
{n\choose 2} (n-k+1)\leq |E(D(G))| \leq n({n \choose 2}-k+1).
\end{equation}
Also, the following two results given in \cite{MJB} are useful in
our later work.
\begin{Observation}\label{obs3}
\cite{MJB} Let $n_{1},...,n_{r}$ and n be positive integers, with
$\sum_{i=1}^{r} n_{i}=n$. Then, $\sum_{i=1}^{r} {n_{i} \choose 2}$
is minimum if and only if $|n_{i}-n_{j}|\leq 1$, for each $1\leq
i,j\leq r$.
\end{Observation}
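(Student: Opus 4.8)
The plan is to use a standard \emph{smoothing} (exchange) argument based on the convexity of the map $x\mapsto {x\choose 2}=\frac{x(x-1)}{2}$. Since there are only finitely many ways to write $n$ as an ordered sum of $r$ positive integers, the objective $\sum_{i=1}^r{n_i\choose 2}$ attains a minimum; I would fix a minimizing tuple $(n_1,\dots,n_r)$ and show it must satisfy $|n_i-n_j|\le 1$ for all $i,j$, which gives the ``only if'' direction.

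The core computation is a single exchange step. Suppose, for contradiction, that $n_i\ge n_j+2$ for some $i,j$. Replace $n_i$ by $n_i-1$ and $n_j$ by $n_j+1$, keeping the other entries unchanged; the result is still a tuple of positive integers summing to $n$ (note $n_i-1\ge n_j+1\ge 1$, so feasibility is preserved). Using ${m\choose 2}-{m-1\choose 2}=m-1$ and ${m+1\choose 2}-{m\choose 2}=m$, the resulting change in the objective is
\[
\Big[{n_i-1\choose 2}+{n_j+1\choose 2}\Big]-\Big[{n_i\choose 2}+{n_j\choose 2}\Big]=-(n_i-1)+n_j=n_j-n_i+1\le -1<0,
\]
which strictly decreases $\sum_i{n_i\choose 2}$, contradicting minimality. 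Hence a minimizer satisfies $|n_i-n_j|\le1$ for all $i,j$.

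For the converse, I would observe that any tuple with $|n_i-n_j|\le 1$ for all $i,j$ is uniquely determined as a multiset by $n$ and $r$: all entries lie in $\{q,q+1\}$ for a single integer $q$, and if $s$ entries equal $q+1$ then $n=qr+s$, which forces $q=\lfloor n/r\rfloor$ and $s=n-qr$ (the near-balanced composition). Therefore every such tuple has the same objective value $s{q+1\choose 2}+(r-s){q\choose 2}$; since one of them is the minimizer produced above, all of them are minimizers, proving the ``if'' direction. Alternatively, one can avoid the uniqueness statement altogether: the exchange step above changes the objective by exactly $0$ precisely when $n_i=n_j+1$, so any two near-balanced tuples are connected by objective-preserving exchanges and hence share the common (minimal) value.

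There is no serious obstacle here; the argument is entirely elementary. The only points needing a line of care are checking that the exchange step stays within the feasible set of positive integers (handled by $n_i-1\ge n_j+1\ge1$), and the short counting argument establishing that the near-balanced tuple is unique as a multiset for given $n$ and $r$ (or, equivalently, the remark that the $0$-cost exchanges connect all near-balanced tuples). Both are routine, so the write-up should be brief.
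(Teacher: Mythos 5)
Your proof is correct and complete. Note that the paper itself gives no proof of this statement: it is quoted as Observation from Jannesari and Omoomi \cite{MJB}, so there is no argument in the text to compare yours against. Your smoothing/exchange argument is the standard one for such convexity statements, and both directions are handled properly: the single-exchange computation $n_j-n_i+1\le -1$ correctly shows a non-balanced tuple cannot be minimal, and the observation that the near-balanced multiset is uniquely determined by $n$ and $r$ (all parts in $\{q,q+1\}$ with $q=\lfloor n/r\rfloor$) correctly upgrades ``some minimizer is near-balanced'' to the stated ``if and only if.'' That last step is worth keeping explicit in a write-up, since many sketches of this fact prove only the ``only if'' direction and leave the ``if'' direction dangling.
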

\begin{Lemma}\label{lma2}
\cite{MJB} Let $n,p_{1},p_{2},q_{1},q_{2},r_{1}$ and $r_{2}$ be positive integers, such that $n=p_{i}q_{i}+r_{i}$ and $r_{i}<p_{i}$, for $1\leq i\leq 2$. If $p_{1}<p_{2}$, then\\
$(p_{1}-r_{1}){q_{1} \choose 2}+r_{1}{q_{1}+1 \choose 2} \geq
(p_{2}-r_{2}){q_{2} \choose 2} +r_{2}{q_{2}+1 \choose 2}$.
\end{Lemma}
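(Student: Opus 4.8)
The plan is to read both sides of the inequality as the minimum of $\sum_i {n_i \choose 2}$ over partitions of $n$ into a prescribed number of positive parts, and then to show that this minimum is non-increasing in the number of parts. Note first that the hypotheses (all of $p_i,q_i,r_i$ positive with $n=p_iq_i+r_i$ and $r_i<p_i$) force $n=p_iq_i+r_i\ge p_i+1$, so $p_i\le n-1$; in particular partitions of $n$ into $p_i$ positive parts exist.

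For a positive integer $p$ with $p<n$, write $n=pq+r$ with $0\le r<p$ and set $g(p):=(p-r){q \choose 2}+r{q+1 \choose 2}$. The first step is to observe, via Observation \ref{obs3}, that $g(p)$ is exactly the minimum of $\sum_{i=1}^{p}{n_i \choose 2}$ over all ways of writing $n=n_1+\cdots+n_p$ with each $n_i$ a positive integer. Indeed, by Observation \ref{obs3} a minimizing partition is one with $|n_i-n_j|\le 1$ for all $i,j$; such a partition has all parts in $\{q,q+1\}$, and since the part sizes sum to $n$ it has precisely $r$ parts equal to $q+1$ and $p-r$ parts equal to $q$ (all positive, since $p<n$ gives $q\ge 1$), so its objective value is $g(p)$. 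Consequently the left-hand and right-hand sides of the claimed inequality are $g(p_1)$ and $g(p_2)$ respectively.

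The second step is the monotonicity $g(p)\ge g(p+1)$ for every $1\le p<n$. Fix a partition $n=n_1+\cdots+n_p$ into positive parts attaining $g(p)$. Since $p<n$, the parts are not all equal to $1$, so some $n_j\ge 2$; replace $n_j$ by the two positive parts $1$ and $n_j-1$. This produces a partition of $n$ into $p+1$ positive parts whose objective value equals $g(p)-{n_j \choose 2}+{n_j-1 \choose 2}+{1 \choose 2}=g(p)-(n_j-1)\le g(p)$. Because $g(p+1)$ is the minimum over all such $(p+1)$-part partitions, $g(p+1)\le g(p)$.

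Finally, since $p_1<p_2$ and both lie in the range $[1,n-1]$, I chain the monotonicity step: $g(p_1)\ge g(p_1+1)\ge\cdots\ge g(p_2)$, which is the asserted inequality. The only point needing care is the bookkeeping in the first step --- verifying that the balanced partition has exactly $r$ copies of $q+1$ and $p-r$ copies of $q$ so that its cost is literally the displayed expression, and checking that Observation \ref{obs3} applies to give its optimality; once that identification is in place, the split-off-a-$1$ argument and the telescoping are routine.
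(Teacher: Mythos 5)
The paper does not prove this lemma at all: it is imported verbatim from \cite{MJB} with a citation and no argument, so there is no in-paper proof to compare against. Your proof is correct and self-contained. The key identification --- that $(p-r)\binom{q}{2}+r\binom{q+1}{2}$ is exactly the minimum of $\sum_{i=1}^{p}\binom{n_i}{2}$ over partitions of $n$ into $p$ positive parts --- is legitimate: the division hypotheses force $p_i\le n-1$ and $q_i\ge 1$, so the balanced partition with $r$ parts of size $q+1$ and $p-r$ parts of size $q$ exists, has positive parts, sums to $n$, and by Observation \ref{obs3} is a minimizer; uniqueness of quotient and remainder makes the bookkeeping come out to the displayed expression. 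The monotonicity step is also sound: splitting a part $n_j\ge 2$ into $1$ and $n_j-1$ changes the objective by $-\binom{n_j}{2}+\binom{n_j-1}{2}+\binom{1}{2}=-(n_j-1)\le 0$, and such a part exists because $p<n$; chaining from $p_1$ up to $p_2$ stays within the range where partitions into positive parts exist. This is essentially the natural proof of the Jannesari--Omoomi lemma, phrased as ``the optimal cost is non-increasing in the number of parts.'' One cosmetic remark: the lemma as stated requires $r_1,r_2$ to be \emph{positive}, whereas Theorem \ref{thm3} applies it with remainders that may be $0$; your function $g$ is defined for $0\le r<p$ and your argument covers that case too, so your proof actually establishes the version the paper really needs.
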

 We define distance
partition of $V(G)$ with respect to $v\in V(G)$, into distance
classes $\Psi_{i}(v)$ $(1\le i \le e(v))$ defined as:
$\Psi_{i}(v)=\{x\in V(G)| \: d(v,x)=i\}$ .
\begin{Proposition}\label{ObsrvFixDiferentClas}
Let $G$ be a distance transitive graph and $v,x,y\in V(G)$. Then
$x,y\in \Psi_i (v)$ for some $i$ $(1\le i \le e(v))$ if and only if
$v$ is non-adjacent to pair $(x,y)\in V_s(G)$ in $D(G)$.
\end{Proposition}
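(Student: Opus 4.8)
The plan is to unwind both directions using the definition of the fixing graph $D(G)$ together with the distance-transitivity of $G$. Recall that $v$ is non-adjacent to a pair $(x,y)\in V_s(G)$ in $D(G)$ precisely when there is an automorphism $g\in stab(v)$ with $x\sim^g y$, i.e.\ some $g$ fixing $v$ swaps (or at least identifies) $x$ and $y$. So the statement to prove is: $x,y\in\Psi_i(v)$ for a common $i$ if and only if some automorphism fixing $v$ maps $x$ to $y$.

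For the forward direction, suppose $x,y\in\Psi_i(v)$, so $d(v,x)=i=d(v,y)$. By distance-transitivity applied to the two pairs $(v,x)$ and $(v,y)$ (which have equal distance $i$), there is $g\in Aut(G)$ with $v\sim^g v$ and $x\sim^g y$; the first condition says $g(v)=v$, so $g\in stab(v)$, and the second says $x\sim^g y$. Hence by the definition of $D(G)$, $v$ is non-adjacent to $(x,y)$. One should note here that $x\neq y$ is guaranteed since $(x,y)\in V_s(G)$, and conversely that a pair $(x,y)$ with $x\neq y$ lying in the same distance class does lie in $V_s(G)$ precisely because such a $g$ exists — so the hypothesis $(x,y)\in V_s(G)$ is automatically met and the statement is not vacuous.

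For the converse, suppose $v$ is non-adjacent to $(x,y)\in V_s(G)$ in $D(G)$. By definition this gives $g\in stab(v)$ with $x\sim^g y$, i.e.\ $g(v)=v$ and (relabelling if necessary) $g(x)=y$. Since automorphisms preserve distances, $d(v,y)=d(g(v),g(x))=d(v,x)$. Writing $i=d(v,x)$, this says $x\in\Psi_i(v)$ and $y\in\Psi_i(v)$, which is the desired conclusion.

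I expect this proof to be essentially immediate; there is no serious obstacle. The only point that needs a little care is making sure the biconditional is not vacuous and that the roles of $x$ and $y$ in the relation $x\sim^g y$ are handled symmetrically — i.e.\ that "$x\sim^g y$ for some $g\in stab(v)$" is the correct reading of non-adjacency in $D(G)$, which follows directly from the definition of the fixing graph given just before Remark~\ref{RemarkSG}. Everything else is a direct application of distance-transitivity in one direction and of the distance-preserving property of automorphisms in the other.
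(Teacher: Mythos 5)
Your proof is correct and follows essentially the same route as the paper's: distance-transitivity applied to the equidistant pairs $(v,x)$ and $(v,y)$ gives the required $g\in stab(v)$ in one direction, and the distance-preserving property of the automorphism gives the converse. Your added remarks about non-vacuousness and the reading of non-adjacency in $D(G)$ are harmless extra care but do not change the argument.
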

\begin{proof}
Let $x,y\in \Psi_i (v)$ for some $i$ $(1\le i \le e(v))$, then
$d(v,x)=d(v,y)=i$ and by definition of distance-transitive graph
there exist an automorphism $g\in Aut(G)$ such that $v\sim^g v$ and
$x\sim^g y$. Thus $x\sim^g y$ by an automorphism $g\in stab(v)$ and
consequently the pair
$(x,y)$ is not adjacent to $v$ in $D(G)$.\\
Conversely, suppose $v$ is non-adjacent to pair $(x,y)\in V_s(G)$,
then $x\sim^g y$ by an arbitrary $g\in stab(v)$. Since $g$ is an
isometry, therefore $d(v,x)=d(g(v),g(x))=d(v,y)=i$ (say). Thus $x,y$
are in same distance class $\Psi_i (v)$.
\end{proof}

\begin{Proposition}\label{PropDegvDistancTransKDetermined}
Let $G$ be a distance-transitive graph of order $n$. If $G$ is
$k$-fixed, then for each $v\in V(G)$, $
\emph{deg}_{D(G)}(v)={n\choose
2}-\sum_{i=1}^{e(v)}{|\Psi_{i}(v)|\choose 2}.$
\end{Proposition}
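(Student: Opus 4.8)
The plan is to compute $\deg_{D(G)}(v)$ directly by counting, for a fixed vertex $v\in V(G)=S(G)$, how many pairs $(x,y)\in V_s(G)$ are adjacent to $v$ in the fixing graph $D(G)$, and subtract from the total number of pairs. Since $G$ is distance-transitive, it is vertex-transitive, so $S(G)=V(G)$ and $V_s(G)$ is the set of all $\binom{n}{2}$ unordered pairs of distinct vertices; this is already recorded in the excerpt. Thus $\deg_{D(G)}(v)=\binom{n}{2}-|\{(x,y)\in V_s(G): v\text{ is non-adjacent to }(x,y)\text{ in }D(G)\}|$.

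The core step is to apply Proposition \ref{ObsrvFixDiferentClas}, which characterizes exactly the pairs $(x,y)$ to which $v$ is \emph{non}-adjacent in $D(G)$: these are precisely the pairs with $x,y$ lying in the same distance class $\Psi_i(v)$ for some $i$ with $1\le i\le e(v)$. Therefore the number of such non-adjacent pairs equals $\sum_{i=1}^{e(v)}\binom{|\Psi_i(v)|}{2}$, since the distance classes $\Psi_1(v),\dots,\Psi_{e(v)}(v)$ are pairwise disjoint and a pair of distinct vertices contributes to this count exactly once (namely for the unique $i$ equal to their common distance from $v$) if and only if both its members have the same distance from $v$. Subtracting gives the claimed formula $\deg_{D(G)}(v)=\binom{n}{2}-\sum_{i=1}^{e(v)}\binom{|\Psi_i(v)|}{2}$.

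One small bookkeeping point to address: the distance partition as defined in the excerpt runs over $1\le i\le e(v)$ and does not include the class $\Psi_0(v)=\{v\}$; since $\binom{1}{2}=0$ this omission is harmless, but I would mention it so the reader sees the sum accounts for every pair of distinct vertices other than those involving $v$ itself — and pairs of the form $(v,x)$ are never "non-adjacent pairs to $v$" in the sense of Proposition \ref{ObsrvFixDiferentClas} because such a pair would require $v,x$ in a common distance class $\Psi_i(v)$ with $i\ge 1$, impossible since $v\in\Psi_0(v)$. Hence every non-adjacent pair is counted exactly once by $\sum_{i=1}^{e(v)}\binom{|\Psi_i(v)|}{2}$, and every pair not counted there is adjacent to $v$.

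I do not anticipate a genuine obstacle here: the proof is essentially immediate once Proposition \ref{ObsrvFixDiferentClas} is invoked, and the only thing requiring care is the disjointness/partition argument ensuring no double counting and no omission. The write-up would be: state $V_s(G)$ has $\binom{n}{2}$ elements by vertex-transitivity; invoke Proposition \ref{ObsrvFixDiferentClas} to identify non-neighbors of $v$; count them as $\sum_{i=1}^{e(v)}\binom{|\Psi_i(v)|}{2}$ using disjointness of the $\Psi_i(v)$; conclude by subtraction.
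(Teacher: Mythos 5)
Your proposal is correct and follows essentially the same route as the paper's proof: invoke Proposition \ref{ObsrvFixDiferentClas} to identify the non-neighbors of $v$ in $D(G)$ as exactly the pairs lying in a common distance class, count them as $\sum_{i=1}^{e(v)}{|\Psi_i(v)|\choose 2}$ by disjointness, and subtract from ${n\choose 2}$. Your extra remark about pairs involving $v$ itself and the omitted class $\Psi_0(v)$ is a harmless refinement the paper leaves implicit.
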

\begin{proof}
By Propositon \ref{ObsrvFixDiferentClas}, the only pairs $(x,y)\in
V_s(G)$ which are non-adjacent to $v\in V(G)$ are those in which
both $x,y$ belong to same distance class $\Psi_i(v)$ for each $i$
$(1\le i \le e(v))$. So the number of such pairs in $V_s (G)$ which
are not adjacent to $v$ is $\sum_{i=1}^{e(v)}{|\Psi_{i}(v)| \choose
2}$. Therefore, $\emph{deg}_{D(G)}(v)={n \choose
2}-\sum_{i=1}^{e(v)}{|\Psi_{i}(v)| \choose 2}$
\end{proof}
Thus, an expression for $|E(D(G))|$ can be obtained using
Proposition \ref{PropDegvDistancTransKDetermined},
\begin{equation}\label{eq10}
|E(D(G))|=\sum_{v\in V(G)}[{n \choose
2}-\sum_{i=1}^{e(v)}{|\Psi_{i}(v)| \choose 2}]\\ =n{n \choose
2}-\sum_{v\in V(G)} \sum_{i=1}^{e(v)}{|\Psi_{i}(v)| \choose
2}\end{equation}

From (\ref{eq9}) and (\ref{eq10}) we obtain
\begin{equation} \label{eq2}
n(k-1)\leq \sum_{v\in V(G)} \sum_{i=1}^{e(v)}{|\Psi_{i}(v)| \choose
2} \leq {n \choose 2} (k-1).
\end{equation}

\begin{Theorem}\label{thm3}
Let $G$ be a distance-transitive graph of order $n$ and diameter
$d$. If $G$ is $k$-fixed, then $k\geq \frac{n-1}{d}.$
\end{Theorem}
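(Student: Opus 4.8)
The plan is to use inequality (\ref{eq2}), specifically its left-hand inequality $n(k-1)\leq \sum_{v\in V(G)}\sum_{i=1}^{e(v)}\binom{|\Psi_i(v)|}{2}$, together with an upper bound on the inner double sum obtained from the structure of the distance partition. First I would fix a vertex $v\in V(G)$ and note that since $G$ is distance-transitive of diameter $d$, the eccentricity $e(v)$ equals $d$ for every $v$ (vertex-transitivity gives constant eccentricity, and it equals the diameter). Hence the distance partition of $V(G)\setminus\{v\}$ consists of exactly $d$ nonempty classes $\Psi_1(v),\dots,\Psi_d(v)$ with $\sum_{i=1}^d |\Psi_i(v)| = n-1$.

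The key combinatorial step is to bound $\sum_{i=1}^d \binom{|\Psi_i(v)|}{2}$ from above. A crude but sufficient bound: for nonnegative integers summing to $n-1$ split into $d$ parts, $\sum_i \binom{n_i}{2} \leq \binom{n-1}{2}$ is too weak; instead I want something like $\sum_i \binom{n_i}{2} \leq \binom{n-d}{2}$ or, more usefully, a bound that when summed over all $v$ and fed into (\ref{eq2}) yields $k-1 \leq$ something of order $\frac{n-1}{d}$ — wait, that is the wrong direction. Let me reconsider: to get a \emph{lower} bound on $k$ I need a \emph{lower} bound on the double sum, so I should instead use the left inequality of (\ref{eq2}) in reverse, i.e. bound $\sum_v\sum_i\binom{|\Psi_i(v)|}{2}$ from above by something, giving ${n\choose 2}(k-1) \ge \sum_v\sum_i \binom{|\Psi_i(v)|}{2}$; that is already in (\ref{eq2}) but gives an upper bound on the sum, not a lower bound on $k$. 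So the right move is: use $n(k-1) \le \sum_v\sum_i\binom{|\Psi_i(v)|}{2}$ is also the wrong direction. Reconsidering once more — to conclude $k \ge \frac{n-1}{d}$ I need an inequality of the form (something) $\le k-1$. From (\ref{eq2}) the only such inequality is via a lower bound on the double sum: if I can show $\sum_v\sum_i\binom{|\Psi_i(v)|}{2} \ge \binom{n}{2}\cdot\frac{n-1-d}{d}$ or similar, then combined with the right inequality $\sum_v\sum_i\binom{|\Psi_i(v)|}{2} \le \binom{n}{2}(k-1)$ I get $k-1 \ge \frac{n-1-d}{d}$, hence $k \ge \frac{n-1}{d}$.

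So the real work is the lower bound $\sum_{i=1}^d\binom{|\Psi_i(v)|}{2} \ge \binom{n}{2}\cdot\frac{n-1-d}{d(n-1)}\cdot(n-1)$ — more cleanly, I would invoke Observation \ref{obs3} (convexity): among all ways of writing $n-1$ as a sum of $d$ positive integers, $\sum_i\binom{n_i}{2}$ is \emph{minimized} when the parts are as equal as possible. The minimum value is then at least $d\binom{(n-1)/d}{2}$ (using the real-valued relaxation as a lower bound on the integer minimum), which equals $\frac{d}{2}\cdot\frac{n-1}{d}\left(\frac{n-1}{d}-1\right) = \frac{1}{2}\cdot\frac{n-1}{d}\,(n-1-d)$. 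Summing this lower bound over all $n$ vertices gives $\sum_{v}\sum_i\binom{|\Psi_i(v)|}{2} \ge \frac{n(n-1)(n-1-d)}{2d}$. Pairing with the upper bound from (\ref{eq2}), namely $\binom{n}{2}(k-1) = \frac{n(n-1)}{2}(k-1) \ge \frac{n(n-1)(n-1-d)}{2d}$, and cancelling $\frac{n(n-1)}{2}$ yields $k-1 \ge \frac{n-1-d}{d} = \frac{n-1}{d}-1$, hence $k \ge \frac{n-1}{d}$, which is exactly the claim.

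The main obstacle I anticipate is making the convexity argument fully rigorous over the integers: Observation \ref{obs3} tells us the minimizer has all parts within $1$ of each other, so the true minimum is $(d-t)\binom{q}{2} + t\binom{q+1}{2}$ where $n-1 = qd + t$, $0\le t < d$; I would need to check that this integer minimum is still $\ge \frac{(n-1)(n-1-d)}{2d}$, which follows from the standard fact that the integer-balanced sum of binomial coefficients dominates the real-valued expression $d\binom{(n-1)/d}{2}$ — a short convexity/Jensen estimate. (Lemma \ref{lma2} in the excerpt is precisely the tool for comparing such balanced sums, so I can cite it if a cleaner phrasing is needed.) Everything else is bookkeeping: constant eccentricity from vertex-transitivity, the exact number of distance classes equalling $d$, and the arithmetic of dividing through by $\binom{n}{2}$.
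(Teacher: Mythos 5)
Your proposal is correct and follows the same overall route as the paper: both arguments combine the upper bound $\sum_{v}\sum_{i}\binom{|\Psi_i(v)|}{2}\leq\binom{n}{2}(k-1)$ from (\ref{eq2}) with a convexity lower bound on the inner sum over the distance partition. The difference is in how that lower bound is executed. The paper works with the exact integer minimum: it writes $n-1=q(v)e(v)+r(v)$, invokes Observation \ref{obs3} to identify the balanced partition as the minimizer, then needs Lemma \ref{lma2} to compare the bounds across vertices of different eccentricities, and finally does some floor-function arithmetic to extract $k\geq\lceil\frac{n-1}{d}\rceil$. You instead observe that vertex-transitivity forces $e(v)=d$ for every $v$ (which already makes Lemma \ref{lma2} unnecessary) and replace the integer-balanced minimum by the real-valued Jensen bound $\sum_i\binom{n_i}{2}\geq d\binom{(n-1)/d}{2}=\frac{(n-1)(n-1-d)}{2d}$, which holds directly for any nonnegative reals summing to $n-1$ since $x\mapsto x(x-1)/2$ is convex --- so the ``main obstacle'' you flag at the end is not actually an obstacle, and you do not need Observation \ref{obs3} at all. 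After summing over the $n$ vertices and cancelling $\binom{n}{2}$, this yields $k\geq\frac{n-1}{d}$ cleanly. What you lose relative to the paper is only the marginally stronger intermediate conclusion $k\geq\lceil\frac{n-1}{d}\rceil$, which the theorem statement does not claim anyway; what you gain is a shorter proof with no case analysis on the remainder $r$. (The mid-proposal back-and-forth about which inequality of (\ref{eq2}) to use resolves correctly; only the final direction --- lower-bound the double sum, pair with $\binom{n}{2}(k-1)$ --- matters.)
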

\begin{proof}
Note that, for each $v\in V(G)$,
$|\bigcup_{i=1}^{e(v)}\Psi_{i}(v)|=n-1$. For $v\in V(G)$, let
$n-1=q(v)e(v)+r(v)$, where $0\leq r(v)<e(v)$. Then, by Observation
\ref{obs3}, $\sum_{i=1}^{e(v)}{|\Psi _{i}(v)| \choose 2}$ is minimum
if and only if $|\;|\Psi_{i}(v)|-|\Psi_{j}(v)|\;| \leq 1$, where
$1\leq i,j \leq e(v)$. This condition will be satisfied if there are
$r(v)$ distance classes having $q(v)+1$ vertices and $e(v)-r(v)$
distance classes having $q(v)$ vertices.\\
%\DeclareGraphicsExtensions{.eps,.ps,.eps.gz,.ps.gz,.eps.Z}
%\begin{figure}[h]
%        \centerline
%        {\includegraphics[width=10cm]{figa.eps}}
%        \caption{}
%        \label{fig1}
%\end{figure}
Thus, the number of pair of vertices in $\Psi_{i}(v)$ having
$q(v)+1$ vertices is $r(v) {q(v)+1 \choose 2}$ and the number of
pair of vertices in $\Psi_{i}(v)$ having $q(v)$ vertices is
$(e(v)-r(v)){ q(v) \choose 2}$. Thus,
\begin{equation} \label{eq3}
(e(v)-r(v)){q(v) \choose 2}+r(v) {q(v)+1 \choose 2}\leq
\sum_{i=1}^{e(v)}{|\Psi _{i}(v)| \choose 2}.
\end{equation}
Let $w\in V(G)$ with $e(w)=d$, $r(w)=r$, and $q(w)=q$, then $n-1=qd+r$. Since for each $v\in V(G)$, $e(v)\leq e(w)$, by Lemma \ref{lma2},\\
$(d-r){q \choose 2}+r {q+1 \choose 2} \leq (e(v)-r(v)) {q(v) \choose 2}+ r(v) {q(v)+1 \choose 2}.$\\
Therefore,
\begin{displaymath}\label{eq4}
n[(d-r){q \choose 2} +r {q+1 \choose 2} ] \leq \sum_{v\in
V(G)}[(e(v)-r(v)){q(v) \choose 2}+r(v) {q(v)+1 \choose 2}.
\end{displaymath}
Thus, by relation (\ref{eq2}) and (\ref{eq3})
\begin{displaymath}
n[(d-r){q \choose 2} +r {q+1 \choose 2} ] \leq \sum_{v\in V(G)}
\sum_{i=1}^{e(v)} {|\Psi_{i}(v)| \choose 2} \leq {n \choose 2 }
(k-1).
\end{displaymath}
Hence, $q[(d-r)(q-1)+r(q+1)]\leq (n-1)(k-1)$, which implies,
$q[(r-d)+(d-r)q+r(q+1)]\leq (n-1)(k-1)$. Therefore,
$q(r-d)+q(n-1)\leq (n-1)(k-1)$. Since $q=\lfloor \frac{n-1}{d}
\rfloor$, we have
\[k-1\geq q+q \frac{r-d}{n-1}=q+\frac{qr}{n-1}-\frac{qd}{n-1}=q+\frac {qr}{n-1}-\frac {\lfloor \frac{n-1}{d}\rfloor d}{n-1}\geq q+ \frac{qr}{n-1}-1.\]
Thus, $k\geq \lfloor \frac{n-1}{d}\rfloor +\frac{qr}{n-1}$. Note
that, $\frac{qr}{n-1}\geq 0$. If $\frac{qr}{n-1}>0$, then $k\geq
\lceil \frac{n-1}{d} \rceil$, since $k$ is an integer. If
$\frac{qr}{n-1}=0$, then $r=0$ and consequently, $d$ divides $n-1$.
Thus, $\lfloor \frac{n-1}{d} \rfloor = \lceil \frac{n-1}{d} \rceil$.
Therefore, $k\geq \lceil \frac{n-1}{d} \rceil \geq \frac {n-1}{d}$.
\end{proof}


\begin{thebibliography}{99}
\bibitem {Mich}
M. O. Albertson and D. L. Boutin, \emph{Using determining sets to
distinguish Kneser graphs}, The Electronic Journal of Combintorics
14 (2007), R\#20.
%\bibitem{DB}
%D.L. Boutin, \emph{Determining sets, resolving sets, and the
%exchange property},

\bibitem{NBiggs}
N. Biggs, \emph{Algebraic Graph Theory (2nd ed.)}, Cambridge
University Press, pp. 155 - 163, chapter 20.
\bibitem{B}
D. L. Boutin, \emph{Identifying graph automorphisms using
determining sets}, Electronic Journal of Combintorics 13 (2006),
R\#78.
\bibitem {ODN}
J. Caceres and D. Garijo, \emph{On the determining number and the
metric dimension of graphs}, The Electronic Journal of Combintorics
17 (2010), R\#63.
\bibitem{EH}
D. Erwin and F. Harary, \emph{Destroying automorphisms by fixing
nodes}, Discrete Mathematics, 306 (2006) pp. 3244 - 3252.
\bibitem{FR}
R. Frucht, \emph{Herstellung von graphen mit vorgegebener abstrakter
gruppe.}, Compositio Mathematica 6 (1939), pp. 239 - 250.
\bibitem{CJ}
C. R. Gibbons and J. D. Laison, \emph{Fixing numbers of graphs and
groups}, The Electronic Journal of Combintorics 16 (2009), \#R39.
\bibitem{FH}
F. Harary, \emph{Methods of destroying the symmetries of a graph},
Bulletin of the Malaysian Mathematical Sciences Society 24(2)
(2001), pp. 183 - 191.
\bibitem{HM}
F. Harary and R. A. Melter, \emph{On the metric dimension of a
graph}, Ars Combinatoria, 2 (1976), pp. 191 - 195.
\bibitem{MJB}
M. Jannesari and B. Omoomi, \emph{Characterization of randomly
k-dimensional graphs}, http://arxiv.org/abs/1103.3169.
\bibitem{ORD}
M. Jannesari and B. Omoomi, \emph{On randomly k-dimensional
graphs.}, http://arxiv.org/abs/1103.3169.

\bibitem{IFS}
I. Javaid and M. Fazil and U. Ali and M. Salman, \emph{On some
parameters related to fixing sets in graphs}, submitted for
publication.

\bibitem{IJH}
I. Javed and H. Benish and U. Ali and M. Murtaza, \emph{On some
automorphism related parameters in graphs}, arXiv:1411.4922
[math.CO].
\bibitem{SS}
Skiena S., \emph{Automorphism groups}, 5.2.2 in Implementing
Discrete Mathematics: Combintorics and Graph Theory with
Mathematica, Reading MA : Addison-Wesley, (1990) pp. 184 - 187.
\bibitem{SL}
P. J. Slater, \emph{Leaves of trees}, Congressus Numerantium  14
(1975) 549 - 559.
\end{thebibliography}
\end{document}